\title{Two-sided radial SLE\\and length-biased chordal SLE}
\author{Laurence S. Field}
\date{April 18, 2015}
\newtheorem{thm}{Theorem}
\newtheorem{lem}[thm]{Lemma}
\newtheorem{prop}[thm]{Proposition}
\newtheorem{cor}[thm]{Corollary}
\theoremstyle{definition}
\newtheorem{defn}[thm]{Definition}
\theoremstyle{remark}
\newtheorem{rmk}[thm]{Remark}
\newcommand\D{\mathbb D}
\newcommand\Half{\mathbb H}
\newcommand\R{\mathbb R}
\newcommand\C{\mathbb C}
\newcommand\E{\mathbf E}
\newcommand\F{\mathcal F}
\newcommand\Prob{{\mathbf P}}
\newcommand\K{{\mathcal K}}
\newcommand\B{{\mathcal B}}
\newcommand\M{{\mathcal M}}
\newcommand\dc{{\rm d}}
\newcommand\dd{{\tilde d}}
\newcommand\tmass\Psi
\newcommand\lm\Lambda
\DeclareMathOperator{\crad}{crad}
\renewcommand\Im{\operatorname{Im}}
\newcommand\p\partial
\newcommand\inv{^{-1}}
\newcommand\ol\overline
\newcommand\sm\setminus
\newcommand\exc{\mathcal E}
\newcommand\SLEk{{SLE}$_\kappa$}
\renewcommand\Im{\operatorname{Im}}
\DeclareMathOperator\Area{Area}
\DeclareMathOperator\Cont{Cont}
\DeclareMathOperator\diam{diam}
\DeclareMathOperator\dist{dist}
\DeclareMathOperator\osc{osc}
\DeclareMathOperator\vol{vol}
\begin{document}

\maketitle

\begin{abstract}
We show that, for $\kappa\le 4$, the integral of the two-sided radial \SLEk\ measures over all interior points is chordal \SLEk\ biased by the path's natural length, which is its $(1+\frac\kappa8)$-dimensional Minkowski content.
\end{abstract}

\section{Introduction}

The present paper establishes a relationship between chordal SLE paths and a certain aggregate measure of two-sided radial SLE paths in simply connected planar domains. To explain the motivation and context for our results, we will first mention Brownian measures on curves.

The study of loop measures has flourished since it became apparent that Brownian loop measures and loop soups play a major role in the theory of conformal invariance~\cite{LeJan,Werner,LW}.
Loop measures can be defined algebraically in the setting of Markov chains, and at this level the relationship between loop-erased random walk, uniform spanning trees, loop measures and Gaussian free fields is already apparent~\cite{LP}.
However, in the continuous plane, the Brownian loop measure gains extra significance from its conformal invariance property.
Thanks to Lawler and Trujillo Ferreras' proof that the Brownian loop measure is the scaling limit of random walk loop measure~\cite{LTF}, 
the discrete and continuum loop measures can profitably be used together, as seen in the identification of the scaling limit of the Green's function for loop-erased random walk~\cite{BJVL}.

In the proof that the Brownian loop measure is conformally invariant~\cite{LW}, it is very important that the loops are parametrized \emph{naturally}.
In the case of Brownian motion, this means that the quadratic variation of the path up to any time $t$ is exactly $t$.
The general principle for the construction of unrooted loop measures, seen both discretely and in the continuum~\cite{LTF,LW}, is that if we take the integral of rooted measures over all possible roots, we have overcounted the true unrooted measure by a factor of the random curve's length.
(Heuristically, we want to count each curve only once, not once for each possible root.)

Motivated by the relationship seen among Brownian measures, we dedicate this paper to the proof of Theorem~\ref{thm:lengthbias}, which describes chordal \SLEk\ as an unrooting of rooted measures in a sense that we shall now describe.

In the case of chordal \SLEk, the measure rooted at an interior point~$\zeta$ is \emph{two-sided radial \SLEk\ through $\zeta$}, to be denoted $\mu_\zeta$, which should be thought of as chordal \SLEk\ restricted to those curves passing through the interior point~$\zeta$ (and normalized appropriately).\label{discuss-2sr}

The question that arises from, among other inspirations, the theory of Brownian measures on curves, is the following: how does the aggregate of rooted measures compare to the original unrooted measure?

In the underlying discrete models, it is easy to conceive of the rooted measure as the unrooted measure, \emph{restricted} (not conditioned) to those curves that pass through $\zeta$ (so that its total mass is the probability of passing through $\zeta$).
Then, in the aggregate (sum) of the rooted measures over all possible roots, every curve is counted precisely as many times as possible points it could be rooted at, which is the (discrete, graph) length of the curve. In other words, the aggregate of rooted measures is the unrooted measure biased by path length.

With Theorem~\ref{thm:lengthbias}, we prove that this result carries over to the continuum for \SLEk, if~$\kappa\le4$.
The aggregate of rooted measures is defined to be
\[
\nu=\int_D \mu_\zeta\,dA(\zeta),
\]
which can be understood as a Riemann integral under a natural metric on measures on curves.
Then, if $\mu$ is ordinary chordal \SLEk\ and $D$ is a bounded domain with analytic boundary, $\nu$ exists and
\[
\frac{d\nu}{d\mu}(\gamma)=|\gamma|,
\]
where $|\gamma|$ denotes the $(1+\kappa/8)$-dimensional Minkowski content of $\gamma$, which is the \emph{natural parametrization} for \SLEk\ (as first introduced in~\cite{LS} and more fully analyzed in~\cite{minkowski}).
In other words, the aggregate of rooted measures is the unrooted measure (chordal \SLEk) biased by the \SLEk\ curve's length under the natural parametrization.

\section{Statement of the result}

Let $0<\kappa\le 4$, and consider the chordal \SLEk\ curves defined as follows.
Let $U_t$ be a standard Brownian motion and consider the solution $g_t$ to the half-plane Loewner equation
\[
\p_t g_t(z)=\frac a{g_t(z)-U_t},\qquad
g_0(z)=0,\qquad
\text{where }z\in\ol\Half\text{ and }a=\frac2\kappa.
\]
Then, as first proved by Rohde and Schramm, there is a random simple curve $\gamma:(0,\infty)\to\Half$ such that $\gamma(0+)=0$ and the domain of $g_t$ is $\Half\sm\gamma_t$, where $\gamma_t=\gamma[0,t]$.
This curve is called the chordal Schramm--Loewner evolution with parameter $\kappa$ (\SLEk) from $0$ to $\infty$ in $\Half$.
If $D$ is a simply connected domain and $z,w$ accessible boundary points (interpreted as prime ends if necessary), we define chordal \SLEk\ from $z$ to $w$ in $D$ to be the image of chordal \SLEk\ from $0$ to $\infty$ in $\Half$ under a conformal transformation sending $\Half,0,\infty$ to $D,z,w$; we shall ignore the curve's parametrization for now.

Chordal \SLEk\ then has the following important properties:
\begin{enumerate}
\item
\emph{Conformal Invariance}: the image of chordal \SLEk\ in $D$ from $z$ to $w$ under a conformal map $f$ is chordal \SLEk\ in $f(D)$ from $f(z)$ to $f(w)$.
\item
\emph{Domain Markov Property}: given the initial segment $\gamma_t$ of chordal \SLEk\ in $D$ from $z$ to $w$, the remainder of the curve follows the law of chordal \SLEk\ in $D_t=D\sm\gamma_t$ from  $\gamma(t)$ to $w$.
\item
\emph{Reversibility}: as first proved by Zhan, for $\kappa\le 4$ the reversal of chordal \SLEk\ from $z$ to $w$ in $D$ is chordal \SLEk\ from $w$ to $z$ in $D$, after reparametrization.
\end{enumerate}

By a \emph{configuration} we shall mean a simply connected domain $D$ together with starting and ending points $z,w\in\p D$, which will be suppressed from the notation.

We want to study ``pinned'' or ``rooted'' measures for \SLEk. As mentioned in the introduction, in the case of chordal \SLEk, the measure pinned at an interior point $\zeta$ is \emph{two-sided radial \SLEk\ through $\zeta$},
which is (as far as the probability measure is concerned) just chordal \SLEk\ conditioned to pass through~$\zeta$.
Its construction is much like the construction of the Brownian bridge as a Doob $h$-process.
To understand this construction, we first need to consider the Green's function for chordal \SLEk\ at $\zeta$, which is the normalized probability that the curve passes near $\zeta$.

Let $\gamma$ be chordal \SLEk\ in some configuration~$D$, and let $\zeta\in D$.
The (Euclidean) Green's function for chordal \SLEk\ is
\[
G_D(\zeta)=\lim_{\epsilon\to0+} 
\epsilon^{d-2}\,\Prob\{\dist(\zeta,\gamma)<\epsilon\},
\]
where $d=1+\frac\kappa8$ is the Hausdorff dimension of the curve.
This should be thought of as the normalized probability that the curve passes through~$\zeta$.
Lawler and Rezaei proved that the limit exists, and in the half-plane it takes the form
\begin{equation}\label{eq:ghalf}
G_\Half(\zeta)=c_\kappa\,(\Im \zeta)^{d-2}\,(\sin\arg \zeta)^{4a-1}.
\end{equation}
Moreover, the Green's function is conformally covariant,
\[
G_D(\zeta)=|f'(\zeta)|^{2-d}\,G_{f(D)}(f(\zeta)),
\qquad\text{$f$ conformal on $D$}.
\]

As the definition suggests, if we stop a chordal \SLEk\ curve~$\gamma$ at time $\tau_n=\inf\{t:|\gamma(t)-\zeta|\le e^{-n}\}$, then $M_t^n:=G_{D_{t\wedge\tau_n}}(\zeta)$ is a martingale. 
Tilting the chordal \SLEk\ measure~$\mu$ by this martingale in the sense of Girsanov's theorem,
so that 
\begin{equation}\label{2srdefn}
\frac{d\mu_\zeta^n}{d\mu}(\gamma_t)=G_{D_{t\wedge\tau_n}}(\zeta),
\end{equation}
we obtain a finite measure $\mu_\zeta^n$ of total mass $G_D(\zeta)$ on curves $(\gamma(t):t\le\tau_n)$ starting at~$p$.
Moreover, these tilted measures are consistent: if $m<n$, then $(\gamma(t):t\le\tau_m)$ has the same law under $\mu_\zeta^n$ as under $\mu_\zeta^m$.
Therefore, there is a limit measure $\mu_\zeta$ called \emph{two-sided radial \SLEk\ through $\zeta$} on curves $\gamma:[0,\tau)\to\ol D$ that accumulate to $\zeta$ as $t\to\tau-$, where $\tau=\sup_n\tau_n<\infty$.\label{2sr-def}
Lawler proved endpoint continuity of~$\mu_\zeta$ in~\cite{Lcont}: with probability one $\gamma(t)\to\zeta$ as $t\to\tau-$.
Given a curve $\gamma:[0,\tau]\to\ol D$ sampled from $\mu_\zeta$ (so that $\gamma(0)=z$ and $\gamma(\tau)=\zeta$), we let $\gamma[\tau,\infty)$ be a chordal \SLEk\ in the configuration $(D_\tau,\zeta,w)$, thereby providing the ``other side'' of the two-sided radial \SLEk\ curve.

We now address the parametrization for these curves, which is to be the natural para\-metriz\-ation for \SLEk.
Lawler and Rezaei proved that if we define the natural time spent by~$\gamma$ in a set $S$ to be the $d$-dimensional Minkowski content
\begin{equation}\label{eq:np}
\Theta(S)=\Cont_d(\gamma\cap S)
=\lim_{\epsilon\to0+}
\epsilon^{d-2}\,\Area\{\zeta:\dist(\zeta,\gamma\cap S)<\epsilon\}
\end{equation}
then the limit exists with probability one, and 
\begin{equation}\label{eq:npmoment}
\E[\Theta(S)]=\int_S G(\zeta)\,dA(\zeta).
\end{equation}

We parametrize the curves by natural parametrization, so that $\Theta(\gamma_t)=t$ for all $t$, and denote by $t_\gamma=\Theta(\gamma)$ the length of a curve $\gamma$.
This allows us to consider the aggregate of the measures $\mu_\zeta$.
To do so, we need to consider an appropriate metric on finite measure spaces.
We shall use the Prokhorov metric induced by the metric on curves
\[
d(\gamma,\gamma')=|t_\gamma-t_{\gamma'}|
+\sup_{0\le s\le1} |\gamma(st_\gamma)-\gamma'(st_{\gamma'})|,
\]
which metrizes the topology of weak convergence under $d$.

Then we may attempt to form the aggregate measure
\begin{equation}\label{eq:agg}
\nu=\int_D \mu_\zeta\,dA(\zeta)
\end{equation}
as a Riemann integral, so long as $\mu_\zeta$ is continuous in $\zeta$. This will be an improper Riemann integral in the sense of Definition~\ref{improper}, where the points $z$ and $w$ need to be excised because the Green's function tends to infinity there. See Section~\ref{sec:measures} for more details on this construction.

The aim of this paper is to relate the aggregate $\nu$ of two-sided radial \SLEk\ to the original chordal \SLEk\ measure $\mu$. 
Consider the analogous discrete situation, in which $\mu$ is a measure on finite-length self-avoiding paths and the pinned measure $\mu_\zeta$ is simply $\mu$ restricted to those paths that pass through $\zeta$.
Then the aggregate $\nu=\sum_\zeta\mu_\zeta$ is simply $\mu$ biased by the number of lattice sites that $\gamma$ traverses.
The continuous analogue of this number is the length of $\gamma$ in the natural parametrization, and we prove the corresponding result:
\begin{thm}
\label{thm:lengthbias}
Let $\kappa\le 4$.
Fix a configuration $D = (D,z,w)$ such that the domain $D$ is bounded and $\p D$ is analytic. Let $\mu$ be chordal \SLEk\ in $D$.
Then the aggregate $\nu$ of two-sided radial \SLEk\ in $D$ as defined in~\eqref{eq:agg} exists as an improper Riemann integral.
It is equal to chordal \SLEk\ biased by the curve's length in the natural parametrization:
\[
\frac{d\nu}{d\mu}(\gamma)=t_\gamma.
\]
\end{thm}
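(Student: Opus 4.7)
The plan is to pair both sides against a bounded continuous functional $F$ on the curve space and verify the equivalent integral identity
\[
\int F(\gamma)\,d\nu(\gamma) = \int F(\gamma)\,t_\gamma\,d\mu(\gamma).
\]
The bridge between the two sides is the Minkowski content formula~\eqref{eq:np}, which expresses $t_\gamma$ as a limit of rescaled $\epsilon$-neighbourhood areas. I therefore introduce the approximating measure
\[
\nu^\epsilon(d\gamma) := \epsilon^{d-2}\,\Area\{\zeta\in D:\dist(\zeta,\gamma)<\epsilon\}\;\mu(d\gamma).
\]
Almost-sure convergence of the Minkowski content under $\mu$, together with a uniform $L^1$ domination (available from~\eqref{eq:npmoment} and standard one-point estimates), yields $\int F\,d\nu^\epsilon \to \int F(\gamma)\,t_\gamma\,d\mu(\gamma)$ as $\epsilon\to0$ by dominated convergence.

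By Fubini the same object can be written
\[
\int F\,d\nu^\epsilon = \int_D F^\epsilon(\zeta)\,dA(\zeta),
\qquad
F^\epsilon(\zeta) := \epsilon^{d-2}\,\E_\mu\!\left[F(\gamma)\,\mathbf{1}\{\dist(\zeta,\gamma)<\epsilon\}\right].
\]
I claim $F^\epsilon(\zeta) \to \int F\,d\mu_\zeta$ pointwise in $\zeta\in D$. For $F$ measurable with respect to $\gamma_{\tau_n}$ and $\epsilon \ll e^{-n}$, the strong Markov property at $\tau_n$ reduces the event $\{\dist(\zeta,\gamma)<\epsilon\}$ to the chordal \SLEk\ continuation in $D_{\tau_n}$ from $\gamma(\tau_n)$ to $w$ reaching the $\epsilon$-ball of $\zeta$, which by the defining one-point Green's function estimate occurs with conditional probability $\epsilon^{2-d}G_{D_{\tau_n}}(\zeta)(1+o(1))$. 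Hence
\[
F^\epsilon(\zeta) \;\longrightarrow\; \E_\mu\!\left[F(\gamma_{\tau_n})\,G_{D_{\tau_n}}(\zeta)\,\mathbf{1}\{\tau_n<\infty\}\right] = \int F(\gamma_{\tau_n})\,d\mu_\zeta
\]
by~\eqref{2srdefn}, and consistency of the family $\{\mu_\zeta^n\}_n$ allows $n\to\infty$. For $F$ depending on the full curve (including the continuation past $\zeta$), an approximation in the path metric $d$ together with the domain Markov property, which identifies the post-$\tau_n$ behaviour of $\mu_\zeta$ and of $\mu$ conditioned to pass near $\zeta$ as the same chordal \SLEk\ continuation in the complement, extends the convergence.

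The main obstacle will be swapping the $\epsilon\to0$ limit with the outer integral over $\zeta$ to obtain $\int_D F^\epsilon(\zeta)\,dA(\zeta) \to \int_D\!\int F\,d\mu_\zeta\,dA(\zeta) = \int F\,d\nu$. This requires an $\epsilon$-uniform dominating function for $F^\epsilon(\zeta)$ on $D$. For $\zeta$ away from $\p D$ one expects a uniform one-point bound $F^\epsilon(\zeta) \leq C\|F\|_\infty\,G_D(\zeta)$ for all small $\epsilon$, but near $z$, $w$, and $\p D$ the Green's function blows up or decays in a delicate way. Controlling this near-boundary behaviour is exactly the role of the improper Riemann integral structure and the analytic-boundary hypothesis: they let one sharply majorise $F^\epsilon$ on an exhaustion of $D$ by a fixed multiple of the area-integrable $G_D$, while the tails excised around $z$ and $w$ can be made uniformly small in $\epsilon$. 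Combining the two limits then gives $\int F\,d\nu = \int F(\gamma)\,t_\gamma\,d\mu(\gamma)$ for all bounded continuous $F$, which is the desired identity $d\nu/d\mu(\gamma)=t_\gamma$.
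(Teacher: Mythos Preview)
Your approach is genuinely different from the paper's, and while the Minkowski-content/Fubini outline is reasonable for a weak (Pettis-type) identity, two gaps prevent it from proving the theorem as stated. First, the theorem asserts that $\nu$ exists \emph{as an improper Riemann integral} in the sense of Definitions~\ref{proper}--\ref{improper}: the Riemann sums $\sum_{Q\in\mathcal P}\mu_{z_Q}\,A(Q)$ must converge in the Prokhorov metric as $\|\mathcal P\|\to0$. Your argument, even if it succeeds, shows only that for each bounded continuous $F$ the scalar integral $\int_D\bigl(\int F\,d\mu_\zeta\bigr)\,dA(\zeta)$ exists and equals $\int F\,t_\gamma\,d\mu$. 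That is a weak-integral statement and does not by itself give Prokhorov convergence of Riemann sums; to close the gap you would need at least continuity of $\zeta\mapsto\mu_\zeta$ in the Prokhorov metric, which is essentially the content of Proposition~\ref{smallscale} and the heart of the paper's work.

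Second, your extension of $F^\epsilon(\zeta)\to\int F\,d\mu_\zeta$ from $\gamma_{\tau_n}$-measurable $F$ to general $F$ rests on an incorrect claim. You say the domain Markov property ``identifies the post-$\tau_n$ behaviour of $\mu_\zeta$ and of $\mu$ conditioned to pass near $\zeta$ as the same chordal \SLEk\ continuation.'' It does not: under $\mu_\zeta$ the post-$\tau_n$ law is \emph{two-sided radial} \SLEk\ in $D_{\tau_n}$ through $\zeta$ (still Green's-function-tilted until the curve reaches $\zeta$, then chordal from $\zeta$ to $w$), whereas under $\mu$ restricted to $\{\dist(\zeta,\gamma)<\epsilon\}$ it is chordal \SLEk\ in $D_{\tau_n}$ conditioned on the near-hit event. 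These laws are close as $\epsilon\to0$, but proving that for the \emph{full} curve, including the portion after the closest approach, requires controlling both the spatial excursions and the natural time spent near $\zeta$. The paper does exactly this via reversibility, the Radon--Nikodym comparison on initial and terminal segments (Proposition~\ref{prop:comparison}), the whole-curve escape estimate (Theorem~\ref{alltime}), and the time-spent bound (Theorem~\ref{11sep}), and then sums over a partition to obtain Riemann-sum convergence to $\mu_D$ directly. Your proposal contains no substitute for this analysis near $\zeta$.
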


Importantly, we use Zhan's result on reversibility of chordal \SLEk\ in the proof. In particular, it follows from his result that two-sided radial \SLEk\ and length-biased chordal \SLEk\ are both reversible for $\kappa\le4$, and this property will be used to run the curves from the start and the end in alternation, rather than always from the start.

\section{Measures on curves}
\label{sec:measures}

In order to make sense of an integral of measures, we need to consider an appropriate metric space of measures on curves.
Since the length of an SLE curve in the natural parametrization is random, we need a topology on the curves that is robust under small reparametrizations.

\subsection{Parametrized curves}

Let $\K$ denote the set of parametrized curves (continuous paths) $\gamma:[0,t_\gamma]\to\C$, where $0\le t_\gamma<\infty$.
We call $t_\gamma$ the \emph{time duration} of $\gamma$.
The oscillation or modulus of continuity of $\gamma$ is
\[
\osc_\gamma(\delta)=\sup\{|\gamma(s)-\gamma(t)|:|s-t|\le\delta\},
\]
which is continuous in $\delta$.
In particular, $\osc_\gamma(\delta)\equiv0$ if $t_\gamma=0$.

Let $\|\cdot\|$ be the supremum norm.
We can define metrics $\dc$ and $\dd$ on $\K$ by
\begin{align}
\dc(\gamma_1,\gamma_0) &= |t_{\gamma_1}-t_{\gamma_0}|+\|\gamma_1\circ s_1^*-\gamma_0\circ s_0^*\|, \\
\dd(\gamma_1,\gamma_0) &= \inf_{s_1,s_0}(\|s_1-s_0\|
+\|\gamma_1\circ s_1-\gamma_0\circ s_0\|),
\end{align}
where the infimum is taken over all nondecreasing continuous functions $s_i$ on $[0,1]$ sending $0$ to $0$ and $1$ to $t_{\gamma_i}$, the basic example of which is $s_i^*(x)=xt_{\gamma_i}$.
We should think of $\dd$ as detecting curves that are close after a small reparametrization, whereas $\dc$ detects curves that are close after a small \emph{linear} reparametrization.

The idea of allowing small reparametrizations has a long history. In particular, $\dd$ is almost the same as the Skorokhod metric~(see, e.g.,~\cite[Chapter 3]{Billingsley}) except that it allows any nonnegative time duration $t_\gamma$.
The metric $\dd$ coincides with the metric $d_\K$ in~\cite[Chapter 5]{Lbook}, which was defined as an infimum over increasing homeomorphisms. We use the present definition as it clarifies the case $t_\gamma=0$.

It is clear from the definition that $\dd\le\dc$. There is no uniform bound in the other direction; for instance, one can consider $\gamma_n(t)=2^nt\wedge 1$ on $[0,1]$, for which $\dd(\gamma_m,\gamma_n)\le2^{-m}$ and $\dc(\gamma_m,\gamma_n)\ge1/2$ if $m<n$.
This example also shows that $\K$ is not complete under $\dd$.
Completeness is important in order to be able to define an integral of measures as a limit of Riemann sum approximations.
 
Unfortunately, $\K$ is incomplete also under $\dc$, because of the presence of curves of zero length.
This minor inconvenience can be avoided by considering~$\dc$ as a complete metric on $\tilde\K:=[0,\infty)\times C[0,1]$ where the curve $\gamma_0\in\K$ is identified with the pair $(t_{\gamma_0},\gamma_0\circ s_0^*)$.
Since all measures we consider will assign zero measure to zero-length curves, the extra pairs $(0,\tilde\gamma)\in\tilde\K$ with $\tilde\gamma$ non-constant are of no consequence.

\begin{lem}
If $\gamma_1,\gamma_0\in\K$, then
\[
\dd(\gamma_1,\gamma_0)\le\dc(\gamma_1,\gamma_0)\le\dd(\gamma_1,\gamma_0)+\osc_{\gamma_0}(2\dd(\gamma_1,\gamma_0)).
\]
In particular, if $\gamma_0,\gamma_1,\ldots\in\K$ then $\dc(\gamma_n,\gamma_0)\to0$ if and only if $\dd(\gamma_n,\gamma_0)\to0$.
\end{lem}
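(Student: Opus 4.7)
The first inequality is immediate: the linear reparametrizations $s_i^*(x)=xt_{\gamma_i}$ are admissible in the infimum defining $\dd$, and $\|s_1^*-s_0^*\| = \sup_{x\in[0,1]} x|t_{\gamma_1}-t_{\gamma_0}| = |t_{\gamma_1}-t_{\gamma_0}|$, so plugging them in yields exactly $\dc(\gamma_1,\gamma_0)$ on the right-hand side.

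For the upper bound, the plan is to pass from an arbitrary near-minimizer $(s_0,s_1)$ for $\dd$ to the linear pair $(s_0^*,s_1^*)$ at a cost controlled by the oscillation of $\gamma_0$. Given $\epsilon>\dd(\gamma_1,\gamma_0)$, fix admissible $(s_0,s_1)$ with $\alpha := \|s_1-s_0\|$ and $\beta := \|\gamma_1\circ s_1 - \gamma_0\circ s_0\|$ satisfying $\alpha+\beta<\epsilon$. Evaluating at $x=1$ gives $|t_{\gamma_1}-t_{\gamma_0}|\le\alpha$. For each $x\in[0,1]$, I would use surjectivity of the continuous nondecreasing map $s_1\colon[0,1]\to[0,t_{\gamma_1}]$ to choose $y\in[0,1]$ with $s_1(y)=xt_{\gamma_1}=s_1^*(x)$; then
\[
|s_0(y)-xt_{\gamma_0}|\le|s_0(y)-s_1(y)|+|xt_{\gamma_1}-xt_{\gamma_0}|\le 2\alpha,
\]
whence, inserting $\gamma_0(s_0(y))$ via the triangle inequality,
\[
|\gamma_1(xt_{\gamma_1})-\gamma_0(xt_{\gamma_0})|\le|\gamma_1(s_1(y))-\gamma_0(s_0(y))|+\osc_{\gamma_0}(2\alpha)\le\beta+\osc_{\gamma_0}(2\alpha).
\]
Taking the supremum in $x$ and adding the time bound gives $\dc(\gamma_1,\gamma_0)\le(\alpha+\beta)+\osc_{\gamma_0}(2\alpha)<\epsilon+\osc_{\gamma_0}(2\epsilon)$; letting $\epsilon\downarrow\dd(\gamma_1,\gamma_0)$ completes the bound by continuity of $\osc_{\gamma_0}$.

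The equivalence of convergence is then immediate: $\dc\to 0$ forces $\dd\to 0$ by the first inequality, while $\dd(\gamma_n,\gamma_0)\to 0$ forces $\osc_{\gamma_0}(2\dd(\gamma_n,\gamma_0))\to 0$ by uniform continuity of $\gamma_0$ on its compact domain (vacuous if $t_{\gamma_0}=0$), so the upper bound gives $\dc(\gamma_n,\gamma_0)\to 0$. The only non-routine ingredient is the surjectivity trick for $s_1$, which is what lets us bring the control of $\|\gamma_1\circ s_1-\gamma_0\circ s_0\|$ to bear at the specific point $xt_{\gamma_1}=s_1^*(x)$ demanded by $\dc$, while the resulting discrepancy of at most $2\alpha$ in $\gamma_0$'s argument is absorbed into a single oscillation term. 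Everything else is bookkeeping.
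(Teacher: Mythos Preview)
Your proof is correct and follows essentially the same approach as the paper's: both rely on the surjectivity of $s_1$ onto $[0,t_{\gamma_1}]$ to evaluate the curves at the specific points $xt_{\gamma_1}$ demanded by $\dc$, and then absorb the resulting discrepancy in $\gamma_0$'s argument (bounded by $2\|s_0-s_1\|$) into a single oscillation term. The only cosmetic difference is that the paper packages the surjectivity step via the rescaled reparametrization $\tilde s_1(t)=\frac{t_{\gamma_0}}{t_{\gamma_1}}s_1(t)$, writing $\|\gamma_1\circ s_1^*-\gamma_0\circ s_0^*\|=\|\gamma_1\circ s_1-\gamma_0\circ\tilde s_1\|$ and then applying the triangle inequality, whereas you work pointwise by choosing $y$ with $s_1(y)=xt_{\gamma_1}$; the underlying argument is the same.
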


The lemma implies that $\dd$ is positive definite (so that it is actually a metric) and that $\dc$ and $\dd$ induce the same topology on $\K$.

\begin{proof}
Fix nondecreasing continuous functions $s_i$ on $[0,1]$ sending $0$ to $0$ and $1$ to $t_{\gamma_i}$.
Rescale $s_1$ appropriately for $\gamma_0$ by setting
\[ \tilde s_1(t) = \frac{t_{\gamma_0}}{t_{\gamma_1}}\,s_1(t). \]
We have $|t_{\gamma_1}-t_{\gamma_0}|<\|s_1-s_0\|$ and, since $s_1$ is continuous,
\begin{align*}
\|\gamma_1\circ s_1^*-\gamma_0\circ s_0^*\|
&= \|\gamma_1\circ s_1-\gamma_0\circ\tilde s_1\| \\
&\le \|\gamma_1\circ s_1-\gamma_0\circ s_0\|
 + \|\gamma_0\circ s_0-\gamma_0\circ\tilde s_1\| \\
&\le \|\gamma_1\circ s_1-\gamma_0\circ s_0\|
 + \osc_{\gamma_0}(\|s_0-\tilde s_1\|).
\end{align*}
Moreover,
\[\|s_0-\tilde s_1\|\le \|s_0-s_1\|+\|s_1-\tilde s_1\|=\|s_0-s_1\|+|t_{\gamma_1}-t_{\gamma_0}|\le2\|s_0-s_1\|.
\]
It follows that
\[\dc(\gamma_1,\gamma_0)\le\|s_1-s_0\|+\|\gamma_1\circ s_1-\gamma_0\circ s_0\|+\osc_{\gamma_0}(2\|s_0-s_1\|).\]
The result follows by taking the infimum over all $s_1,s_2$.
\end{proof}

We introduced $\dc$ for its completeness property, but $\dd$ tends to be easier to estimate. One example of this relates to concatenating curves. If $\gamma,\eta\in\K$ with $\gamma(t_\gamma)=\eta(0)$, we define the concatenation $\gamma\oplus\eta$ by
\[
t_{\gamma\oplus\eta}=t_\gamma+t_\eta,\qquad
(\gamma\oplus\eta)(t)=\begin{cases}
\gamma(t),&0\le t<t_\gamma,\\
\eta(t-t_\gamma),&t_\gamma\le t\le t_\gamma+t_\eta.
\end{cases}
\]
Then it is easy to see that
\begin{equation}\label{eq:concat}
\dd(\gamma_1\oplus\eta_1,\gamma_0\oplus\eta_0)\le\dd(\gamma_1,\eta_1)+\dd(\gamma_0,\eta_0),
\end{equation}
but no similar estimate holds for $\dc$.
\subsection{Metric spaces of finite measures}

Let $\B$ denote the Borel sets of $\K$ under the topology induced by $\dd$ (or equivalently $\dc$). 
Let $\M(\K)$ denote the set of finite positive measures on $(\K,\B)$.
If $\mu\in\M(\K)$ we write $|\mu|$ (rather than $\|\mu\|$) for the total mass of $\mu$ in this section, for the sake of legibility.
If $|\mu|\ne0$ we can write $\mu=|\mu|\,\mu^\#$ for a unique probability measure $\mu^\#$.

If $d$ is either $\dd$ or $\dc$, we define the Prokhorov metric on $\M(\K)$ induced by $d$, which we also denote by $d$, as follows:
\[
d(\mu,\nu)=\inf\{\epsilon:\mu(B)\le\nu(B^\epsilon)+\epsilon\text{ and }\nu(B)\le\mu(B^\epsilon)+\epsilon\text{ for all }B\in\B\},
\]
where $B^\epsilon=\{\gamma\in\K:d(\gamma,B)<\epsilon\}$.
This gives two different Prokhorov metrics $\dc$ and $\dd$, since the neighborhood $B^\epsilon$ depends on the choice of metric.
However, they induce the same topology on $\M(\K)$. Indeed, the Prokhorov metric metrizes the topology of weak convergence of probability measures (see~\cite[Appendix III]{Billingsley}), which is the same for $\dc$ as for $\dd$. That they induce the same topology on finite measures then follows from the following lemma.

\begin{lem}
Let $\mu_n\in\M(\K)$. Then $\mu_n\to0$ if and only if $|\mu_n|\to0$. If $\mu\in\M(\K)$ is nonzero then $\mu_n\to\mu$ if and only if $|\mu_n|\to|\mu|$ and $\mu_n^\#\to\mu^\#$.
\end{lem}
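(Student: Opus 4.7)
My plan is to work directly from the Prokhorov definition together with the decomposition $\mu=|\mu|\,\mu^\#$. The crucial general observation is that taking $B=\K$ in both directions of the defining inequality yields $\bigl||\mu|-|\nu|\bigr|\le d(\mu,\nu)$, since $\K^\epsilon=\K$. This single estimate controls all total-mass information.

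For the first claim, if $d(\mu_n,0)\to 0$ then setting $B=\K$ immediately gives $|\mu_n|\to 0$. Conversely, if $|\mu_n|\to 0$ then for every $B$ we have $\mu_n(B)\le|\mu_n|=0+|\mu_n|$, while the reverse inequality is trivial since $0(B)=0$; hence $\epsilon=|\mu_n|$ is admissible and $d(\mu_n,0)\le|\mu_n|\to 0$.

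For the nonzero case, the forward direction starts with $|\mu_n|\to|\mu|$ from the general observation, so eventually $|\mu_n|\ge|\mu|/2>0$ and $\mu_n^\#$ is well defined. Writing $\epsilon_n=d(\mu_n,\mu)\to 0$, I divide the inequality $\mu_n(B)\le\mu(B^{\epsilon_n})+\epsilon_n$ by $|\mu_n|$ and use $\mu^\#(B^{\epsilon_n})\le 1$ to get
\[
\mu_n^\#(B)\le\mu^\#(B^{\epsilon_n})+\bigl||\mu|/|\mu_n|-1\bigr|+\epsilon_n/|\mu_n|.
\]
Defining $\epsilon'_n$ as the sum of $\epsilon_n$ and the two error terms, $\epsilon'_n\to 0$, and since $B^{\epsilon_n}\subseteq B^{\epsilon'_n}$ the Prokhorov bound $\mu_n^\#(B)\le\mu^\#(B^{\epsilon'_n})+\epsilon'_n$ holds uniformly in $B$. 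The symmetric inequality is identical, so $d(\mu_n^\#,\mu^\#)\le\epsilon'_n\to 0$.

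The backward direction is the mirror image: given $|\mu_n|\to|\mu|$ and $\delta_n:=d(\mu_n^\#,\mu^\#)\to 0$, I multiply $\mu_n^\#(B)\le\mu^\#(B^{\delta_n})+\delta_n$ by $|\mu_n|$, bounding $(|\mu_n|/|\mu|-1)\,\mu(B^{\delta_n})$ by $\bigl||\mu_n|-|\mu|\bigr|$ via $\mu(B^{\delta_n})\le|\mu|$, and conclude that $\epsilon'_n:=\delta_n+\bigl||\mu_n|-|\mu|\bigr|+|\mu_n|\delta_n$ witnesses $d(\mu_n,\mu)\le\epsilon'_n\to 0$. There is no serious obstacle: the proof is essentially bookkeeping of error terms that are uniform in $B\in\B$, and the one point requiring care---ensuring $|\mu_n|$ is bounded away from $0$ before forming $\mu_n^\#$---is automatic from $|\mu_n|\to|\mu|>0$.
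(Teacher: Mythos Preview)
Your argument is correct. The only cosmetic wrinkle is that you write $\epsilon_n=d(\mu_n,\mu)$ and then invoke the Prokhorov inequalities at $\epsilon_n$ itself; since $d$ is defined as an infimum, strictly you should take any $\epsilon_n>d(\mu_n,\mu)$ with $\epsilon_n\to0$. This changes nothing of substance.

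The paper's proof reaches the same conclusion by a slightly more structural route. Rather than dividing and multiplying the defining inequalities directly, it first records two scaling facts for the Prokhorov metric, namely $d(\mu,\nu)\le d(a\mu,a\nu)\le a\,d(\mu,\nu)$ for $a>1$ and $d(\mu,a\mu)=(a-1)|\mu|$, and then triangulates through the intermediate measure $|\mu|\,\mu_n^\#$: the distance $d(\mu_n^\#,\mu^\#)$ differs from $d(|\mu|\mu_n^\#,\mu)$ only by a fixed multiplicative factor, while $d(\mu_n,|\mu|\mu_n^\#)=\bigl||\mu_n|-|\mu|\bigr|$. Your approach is more hands-on and avoids stating these auxiliary identities; the paper's approach isolates reusable scaling lemmas. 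Both rest on the same key observation $\bigl||\mu|-|\nu|\bigr|\le d(\mu,\nu)$ obtained from $B=\K$, and neither has any real advantage over the other here.
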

\begin{proof}
It is easy to see that
\[
||\mu|-|\nu||\le d(\mu,\nu)\le |\mu|\vee|\nu|,
\]
which implies that $\mu_n\to0$ if and only if $|\mu_n|\to0$.
Moreover, $\mu_n\to\mu$ implies that $|\mu_n|\to|\mu|$.
If $a>1$, we have
\[
d(\mu,\nu)\le d(a\mu,a\nu)\le a\,d(\mu,\nu),\qquad
d(\mu,a\mu)=(a-1)|\mu|.
\]
Hence, the distances $d(\mu_n^\#,\mu^\#)$ and $d(|\mu|\mu_n^\#,\mu)$ differ by a factor of at most $|\mu|\vee|\mu|\inv$.
Moreover, $d(\mu_n,\mu)$ and $d(|\mu|\mu_n^\#,\mu)$ differ by at most 
\[
d(\mu_n,|\mu|\mu_n^\#)=||\mu_n|-|\mu||.
\]
Under the assumption that $|\mu_n|\to|\mu|$, therefore, $\mu_n\to\mu$ if and only if $\mu_n^\#\to\mu^\#$.
\end{proof}

Another useful feature of the Prokhorov metric is additivity: we easily see from the definition that
\begin{equation}\label{prokhadd}
d(\mu+\mu',\nu+\nu')\le d(\mu,\nu) + d(\mu',\nu').
\end{equation}

\subsection{Riemann integrals of measures}

Since $\dc$ makes $\tilde\K$ a complete separable metric space, the Prokhorov metric $\dc$ also makes $\M:=\M(\tilde\K)$ a complete separable metric space. (We must allow the pairs $(0,\tilde\gamma)\in\tilde\K$ here in order for the space to be complete, but all the measures we consider will be supported on curves of positive time duration).

We shall define Riemann integrals as the limits of Riemann sums in the usual way. Definition~\ref{proper} and the results that follow still apply if  any complete separable metric space is used in place of $\tilde\K$.

Some care is needed for this construction because $\M(\tilde\K)$ is not a normed space. 
For this reason, some of the classical approaches to vector-valued integration such as the Bochner integral (see, e.g.,~\cite[Chapter~V]{Yosida}) are not particularly apposite.
The weaker Gelfand-Pettis integral defined using duality (developed, e.g., in~\cite[Chapter 3]{Rudin}) provides a more general theory of vector-valued integration, but we do not use this theory in any of our arguments. One reason for this is that even establishing that the measure-valued function is continuous is a major part of the work of this paper.

\begin{defn}\label{proper}
Let $R\subset\R^d$ be a rectangle (the product of one closed interval in each coordinate axis) and $P$ a tagged partition of $R$, associating to each subrectangle $I\in P$ the sample point $x_I$.
The mesh $\|P\|$ of a partition $P$ is the length of the longest diagonal of any of its subrectangles.
Let $f:R\to\M$ be a measure-valued function.
The Riemann sum of $f$ on $P$ is
\[
\mathcal S_f(P)=\sum_{I\in P} f(x_I) \vol(I).
\]
The (proper) Riemann integral of $f$ on $R$ is the limit
\[
\int_R f(x)\,d\vol(x) = \lim_{\|P\|\to 0} S_f(P)
\]
taken over all partitions $P$ of $R$ as the mesh $\|P\|$ tends to $0$, if the limit exists.

In the case of a measure-valued function $f$ on a general bounded domain $D$ of integration, we define
\[
\int_D f(x)\,d\vol(x) = \lim_{\|P\|\to 0} S_{\hat f}(P)
\]
where $\hat f$ is $f$ extended by $0$ off $D$, provided that the limit exists.
\end{defn}

As with the usual Riemann integral of functions, the proper Riemann integral fails to exist when the total mass of a measure-valued function has limit infinity at a point.
We shall therefore need the notion of an improper integral. For our purposes, it will be simplest to present only the case where one or two points need to be removed from a (possibly unbounded) domain.
\begin{defn}\label{improper}
Let $f$ be a measure-valued function on a (possibly unbounded) domain $D\subset\R^d$ and suppose $|f(x)|$ tends to infinity as $x$ approaches the points $z$ and $w$ in $\ol D$. The improper integral of $f$ on $D$ is defined to be
\[
\int_D f(x)\,d\vol(x):=\lim_{r,s,t\to 0} \int_{D\cap B(0,\frac1r)\sm \ol{B(z,s)}\sm \ol{B(w,t)}} f(x)\,d\vol(x),
\]
if the limit exists.
\end{defn}

In the remainder of this section we present some basic properties of Riemann integrals of continuous measure-valued functions. This material is unnecessary for the proof of Theorem~\ref{thm:lengthbias}, because in that theorem we prove directly the existence of the limit that defines the integral.

\begin{lem}\label{lem:sum1}
If $a_i\ge0$ and $\mu_i,\nu_i\in\M$ for $i=1,\ldots,n$ with $\sum_i a_i=1$ then
\[
\dc\Bigl(\sum_ia_i\mu_i,\sum_ia_i\nu_i\Bigr)\le\sup_i\dc(\mu_i,\nu_i).
\]
\end{lem}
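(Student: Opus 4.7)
The plan is to work directly from the definition of the Prokhorov metric. The key observation is that the $\epsilon$-enlargement $B^\epsilon$ of a Borel set does not depend on the index $i$, so a single common parameter $\epsilon$ can be used simultaneously for every $i$; this is what makes convex combinations behave better than one might naively expect from scaling properties of the Prokhorov metric alone.

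Concretely, I would fix any $\epsilon > \sup_i \dc(\mu_i, \nu_i)$. For each $i$, the definition of the Prokhorov metric supplies
\[
\mu_i(B) \le \nu_i(B^\epsilon) + \epsilon
\quad\text{and}\quad
\nu_i(B) \le \mu_i(B^\epsilon) + \epsilon
\]
for every $B \in \B$. Multiplying each of these inequalities by $a_i \ge 0$ and summing over $i$, and using $\sum_i a_i = 1$ to collapse the additive error terms, produces
\[
\Bigl(\sum_i a_i\mu_i\Bigr)(B) \le \Bigl(\sum_i a_i\nu_i\Bigr)(B^\epsilon) + \epsilon,
\]
together with the reversed inequality. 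This is exactly the Prokhorov witness that $\dc(\sum_i a_i\mu_i, \sum_i a_i\nu_i) \le \epsilon$, so letting $\epsilon\downarrow\sup_i\dc(\mu_i,\nu_i)$ yields the claim.

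No step presents any real obstacle; the argument is essentially forced by the definition once one notices that the common enlargement $B^\epsilon$ allows the Prokhorov conditions to be summed term by term. This direct route is cleaner than routing through the additivity property~\eqref{prokhadd} together with a scaling estimate on $\dc(a\mu,a\nu)$, since the natural scaling bound $\dc(a\mu,a\nu)\le a\,\dc(\mu,\nu)$ breaks down when $a<1$: in that regime the relevant enlargement becomes $B^{a\epsilon}\subseteq B^\epsilon$, which runs the wrong way for a term-by-term argument.
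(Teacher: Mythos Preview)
Your proof is correct and is essentially the same as the paper's: both fix $\epsilon>\sup_i\dc(\mu_i,\nu_i)$, invoke the defining Prokhorov inequalities for each $i$, and sum against the weights $a_i$ using $\sum_i a_i=1$. The only difference is cosmetic---you add some commentary on why the argument works and why the alternative via~\eqref{prokhadd} would be less convenient.
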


\begin{proof}
Let $V\in\B$ and $\epsilon>\sup_i\dc(\mu_i,\nu_i)$.
As $\mu_i(V)\le\nu_i(V^\epsilon)+\epsilon$ and $\nu_i(V)\le\mu_i(V^\epsilon)+\epsilon$ for all $i$,
we have
\[
\sum_i a_i\mu_i(V)\le\sum_i a_i\nu_i(V^\epsilon)+\epsilon
\quad\text{and}\quad
\sum_i a_i\nu_i(V)\le\sum_i a_i\mu_i(V^\epsilon)+\epsilon.
\qedhere
\]
\end{proof}

Suppose that $f$ is a measure-valued function on a rectangle $R\subseteq \R^d$ that is continuous, hence uniformly continuous, with respect to $\dc$.
For now we shall assume that $R$ is a unit hypercube; this can be undone by scaling later.

\begin{lem}
Suppose $R\subset\R^d$ is a unit hypercube and $P\subset Q$ are nested tagged partitions of $R$; that is, every subrectangle of $Q$ lies in a subrectangle of $P$, but no statement is made about the sample points. Then
\[ \dc(\mathcal S_f(Q),\mathcal S_f(P))\le\osc_f(\|P\|).
\]
\end{lem}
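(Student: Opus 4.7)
The plan is to rewrite both Riemann sums as sums indexed by the finer partition $Q$, then use Lemma~\ref{lem:sum1} to push the distance inside the sum. First, since $P \subset Q$, each subrectangle $J \in Q$ is contained in a unique subrectangle $I(J) \in P$, and $\vol(I) = \sum_{J \in Q,\, J\subset I} \vol(J)$ for every $I \in P$. Therefore
\[
\mathcal S_f(P) = \sum_{I\in P} f(x_I)\,\vol(I) = \sum_{I\in P}\sum_{J\subset I} f(x_I)\,\vol(J) = \sum_{J\in Q} f(x_{I(J)})\,\vol(J),
\]
so both $\mathcal S_f(P)$ and $\mathcal S_f(Q) = \sum_{J\in Q} f(x_J)\,\vol(J)$ are convex combinations (with weights $a_J := \vol(J)$ summing to $\vol(R)=1$) of the measures $f(x_{I(J)})$ and $f(x_J)$ respectively.

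Second, the tags $x_J$ (for $Q$) and $x_{I(J)}$ (for $P$) both lie in $I(J)$, which is a subrectangle of $P$ and so has diameter at most $\|P\|$. Hence $|x_J - x_{I(J)}| \le \|P\|$, and by definition of the modulus of continuity of $f$ with respect to $\dc$,
\[
\dc\bigl(f(x_J),\,f(x_{I(J)})\bigr) \le \osc_f(\|P\|) \qquad \text{for every } J \in Q.
\]
Applying Lemma~\ref{lem:sum1} with weights $a_J = \vol(J)$, measures $\mu_J = f(x_J)$ and $\nu_J = f(x_{I(J)})$, we conclude
\[
\dc\bigl(\mathcal S_f(Q),\,\mathcal S_f(P)\bigr) \le \sup_{J\in Q} \dc\bigl(f(x_J),\,f(x_{I(J)})\bigr) \le \osc_f(\|P\|),
\]
which is the claimed bound.

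There is no real obstacle; the only slightly delicate point is the bookkeeping step of rewriting $\mathcal S_f(P)$ as a sum over $Q$ with tags $x_{I(J)}$, which is what makes the two Riemann sums comparable term-by-term so that Lemma~\ref{lem:sum1} applies. The unit-hypercube hypothesis is used only to make the weights $\vol(J)$ sum to one; for a general rectangle $R$ one would either rescale or insert a factor of $\vol(R)$ on the right-hand side.
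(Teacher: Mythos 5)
Your proof is correct and takes essentially the same route as the paper: both rewrite $\mathcal S_f(P)$ and $\mathcal S_f(Q)$ as sums over the finer partition $Q$ with weights $\vol(J)$, observe that the two tags in each term are within $\|P\|$ of each other so that $\dc$ between the corresponding measures is at most $\osc_f(\|P\|)$, and then apply Lemma~\ref{lem:sum1}. Your version is only slightly more explicit about the reindexing and about why $x_J$ and $x_{I(J)}$ are both in a single subrectangle of $P$.
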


\begin{proof}
We have
\[
\mathcal S_f(P)=\sum_{I\in P}\sum_{\substack{J\in Q\\J\subseteq I}} f(x_I) \vol(J),\qquad
\mathcal S_f(Q)=\sum_{I\in P}\sum_{\substack{J\in Q\\J\subseteq I}} f(x_J) \vol(J).
\]
Write $\epsilon=\osc_f(\|P\|)$.
If $I\in P$ and $J\in Q$ with $J\subseteq I$,
we know that $\dc(f(x_I),f(x_J))\le\epsilon$.
Applying Lemma~\ref{lem:sum1} with weights $\vol(J)$ for $J\in Q$,  we see that $\dc(\mathcal S_f(P),\mathcal S_f(Q))\le\epsilon$.
\end{proof}

By scaling and taking a common refinement if necessary, we obtain:
\begin{cor}
Suppose $R\subset\R^d$ is a rectangle and $P, Q$ are arbitrary tagged partitions of $R$. Then
\[ \dc(\mathcal S_f(Q),\mathcal S_f(P))\le 2\,(\vol(R)\vee1)\,\osc_f(\|P\|).
\]
\end{cor}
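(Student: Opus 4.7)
The plan is to reduce the corollary to the nested-partitions lemma via a common refinement, with the factor $(\vol(R) \vee 1)$ arising from rescaling to total weight $1$ and the factor $2$ coming from the triangle inequality along a segment. First, I would form the common refinement $R^* = \{I \cap J : I \in P,\, J \in Q,\, I \cap J \ne \emptyset\}$, so that each cell $K \in R^*$ sits inside unique subrectangles $I(K) \in P$ and $J(K) \in Q$. Using $\sum_{K \subseteq I} \vol(K) = \vol(I)$, both Riemann sums rewrite as sums indexed by $R^*$ with identical weights but different sample points:
\[
\mathcal S_f(P) = \sum_{K \in R^*} f(x_{I(K)})\,\vol(K),\qquad \mathcal S_f(Q) = \sum_{K \in R^*} f(x_{J(K)})\,\vol(K).
\]

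Setting $V = \vol(R)$, the weights $\vol(K)/V$ sum to $1$, so Lemma~\ref{lem:sum1} applied to the rescaled sums yields
\[
\dc\bigl(\mathcal S_f(P)/V,\, \mathcal S_f(Q)/V\bigr) \le \sup_{K \in R^*} \dc\bigl(f(x_{I(K)}),\, f(x_{J(K)})\bigr).
\]
To bound the right-hand side, I would use that $K \subseteq I(K) \cap J(K)$, so any point of $K$ is within $\|P\|$ of $x_{I(K)}$ and within $\|Q\|$ of $x_{J(K)}$; hence $|x_{I(K)} - x_{J(K)}| \le \|P\| + \|Q\|$. Taking $\|Q\| \le \|P\|$ without loss of generality (swap $P$ and $Q$ otherwise), subadditivity of the modulus of continuity --- namely $\osc_f(\delta_1 + \delta_2) \le \osc_f(\delta_1) + \osc_f(\delta_2)$, proved by inserting an intermediate point on the segment between two nearby sample points and invoking the triangle inequality for $\dc$ --- yields $\osc_f(\|P\|+\|Q\|) \le 2\osc_f(\|P\|)$.

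Finally, to undo the normalization, I would apply the scaling relation $\dc(V\mu, V\nu) \le (V \vee 1)\,\dc(\mu,\nu)$. This is immediate from the bound $\dc(a\mu, a\nu) \le a\,\dc(\mu,\nu)$ noted in the lemma preceding Lemma~\ref{lem:sum1} when $V \ge 1$, and from the same bound applied with $a = 1/V$ after relabelling when $V < 1$. Multiplying through by $V$ gives the desired $\dc(\mathcal S_f(P), \mathcal S_f(Q)) \le 2\,(V \vee 1)\,\osc_f(\|P\|)$. I do not anticipate any genuine obstacle here: the corollary is essentially a repackaging of the nested lemma through the common refinement. The only mild subtlety is the non-homogeneity of the Prokhorov metric under scaling, which forces the volume factor to take the form $(V \vee 1)$ rather than $V$, so that the estimate remains informative when $\vol(R) < 1$.
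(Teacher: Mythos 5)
Your proof is correct, and its skeleton matches the paper's one-line sketch (``by scaling and taking a common refinement if necessary''): both routes hinge on the common refinement $R^*$. The difference is in how you use it. The paper applies the nested-partitions lemma twice (to the pairs $P\subset R^*$ and $Q\subset R^*$) and glues the two bounds with the triangle inequality, producing $\osc_f(\|P\|)+\osc_f(\|Q\|)$. You instead bypass the nested lemma and apply Lemma~\ref{lem:sum1} directly to the two Riemann sums rewritten over $R^*$, obtaining $\osc_f(\|P\|+\|Q\|)$ in one step, and then recover the paper's form via subadditivity of $\osc_f$ (which, as you observe, needs convexity of $R$ --- a point that is automatically satisfied but worth naming). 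Two small advantages of your version: since Lemma~\ref{lem:sum1} carries no unit-hypercube hypothesis, you sidestep the slightly awkward affine rescaling of a general rectangle to a hypercube that the paper's phrase ``by scaling'' implicitly requires; and you scale the measures rather than the domain, which plays more transparently with the non-homogeneity of the Prokhorov metric. One shared wrinkle, present in the paper's own proof just as in yours: the stated bound depends only on $\|P\|$, so both arguments really establish $\dc(\mathcal S_f(Q),\mathcal S_f(P))\le 2(\vol(R)\vee1)\,\osc_f(\|P\|\vee\|Q\|)$; your ``swap $P$ and $Q$ WLOG'' step changes which mesh appears on the right-hand side, so the corollary should be read as tacitly taking $P$ to be the coarser partition.
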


\begin{cor}
If $P_n$ are partitions of a rectangle $R$ with $\|P_n\|\to0$ then $(\mathcal S_f(P_n))$ is a Cauchy sequence in $\M$ and has a limit independent of the sequence $(P_n)$. This limit is therefore the integral $\int_R f(x)\,d\vol(x)$ of $f$ on $R$. Moreover,
\[ \dc\Bigl(\mathcal S_f(P),\int_R f(x)\,d\vol(x)\Bigr)\le(\vol(R)\vee1)\,\osc_f(\|P\|).
\]
\end{cor}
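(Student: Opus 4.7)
The plan is the standard completion-of-Riemann-sums argument carried out in the complete metric space $(\M,\dc)$. The three inputs I would use are: (i) uniform continuity of $f$ on the compact rectangle $R$, which gives $\osc_f(\delta)\to0$ as $\delta\to0$; (ii) the preceding corollary bounding $\dc(\mathcal S_f(Q),\mathcal S_f(P))$ by $2(\vol(R)\vee1)\osc_f(\|P\|)$; and (iii) completeness of $(\M,\dc)$, noted earlier as a consequence of completeness of $(\tilde\K,\dc)$. Given $\epsilon>0$, pick $\delta>0$ with $2(\vol(R)\vee1)\osc_f(\delta)<\epsilon$; then whenever $\|P_m\|,\|P_n\|<\delta$, the preceding corollary (applied with whichever of $P_m,P_n$ has the larger mesh in the role of ``$P$'') gives $\dc(\mathcal S_f(P_m),\mathcal S_f(P_n))<\epsilon$. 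Thus $(\mathcal S_f(P_n))$ is Cauchy and, by completeness, converges to some $L\in\M$. To see that $L$ is independent of the sequence, I would interleave any two sequences $(P_n),(P'_n)$ with vanishing mesh into a single sequence, whose mesh also tends to $0$; the interleaved sequence is then Cauchy by the same argument, so its two subsequential limits must agree. We define $\int_R f\,d\vol:=L$.

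Sending $n\to\infty$ in the preceding corollary directly would yield the estimate only with the factor $2(\vol(R)\vee1)$, so to sharpen the constant in the statement I would use nested refinements. Fix a partition $P$ and construct a sequence $W_n$ of nested refinements of $P$ with $\|W_n\|\to0$ (for example, by repeatedly bisecting every subrectangle of $W_{n-1}$). The preceding nested-case lemma, rescaled from the unit-hypercube form via the inequality $\dc(a\mu,a\nu)\le a\,\dc(\mu,\nu)$ for $a\ge1$, yields $\dc(\mathcal S_f(W_n),\mathcal S_f(P))\le(\vol(R)\vee1)\osc_f(\|P\|)$ for every $n$. Since $\mathcal S_f(W_n)\to\int_R f\,d\vol$, continuity of $\dc$ lets the bound pass to the limit.

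The proof is conceptually routine; the only point requiring any care is the rescaling step converting the nested lemma (stated on the unit hypercube) into the general-rectangle form. The Riemann-sum weights $\vol(I)$ sum to $\vol(R)$ rather than $1$, whereas Lemma~\ref{lem:sum1} is stated for weights summing to $1$, so one must pass through the rescaling identity $\mathcal S_f(P)=\vol(R)\,\mathcal S_{f\circ\phi}(\phi^{-1}P)$ for an affine bijection $\phi$ from the unit hypercube to $R$. The ``$\vee1$'' in the bound absorbs the regime $\vol(R)<1$, in which scaling the measures down by $\vol(R)$ can only decrease the Prokhorov distance. Beyond this bookkeeping, the existence portion is a direct Cauchy-in-complete-space argument.
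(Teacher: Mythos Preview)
Your proposal is correct and matches what the paper intends; the paper in fact states this corollary without proof, treating it as an immediate consequence of the preceding two-partition estimate together with completeness of $(\M,\dc)$. Your observation that the sharper constant (without the factor~$2$) comes from passing to the limit along a sequence of \emph{nested} refinements of $P$, so that the one-sided nested lemma applies directly, is exactly the point the paper leaves implicit; the rescaling bookkeeping you describe is also right, though you can shortcut the affine change of variables by simply normalizing the weights $\vol(J)$ in Lemma~\ref{lem:sum1} to sum to $1$ and then invoking $\dc(a\mu,a\nu)\le(a\vee1)\,\dc(\mu,\nu)$ with $a=\vol(R)$.
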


It is also easy to see that $\bigl|\int_R f(x)\,d\vol(x)\bigr|=\int_R |f(x)| \,d\vol(x)$.

\section{Generalized escape estimates}

In this section we shall demonstrate that chordal SLE biased by the time spent in a small target region is unlikely to retreat far from that region after first coming close to it.  We derive this result from the uniform estimate for two-sided radial SLE, Theorem~1.3 in~\cite{FL}.

Since we aim to prove that the aggregate of two-sided radial SLE measures is length-biased chordal SLE, it is useful to introduce the following notation for length-biased chordal SLE.

Let $\mu$ be the law of a chordal \SLEk\ curve $\gamma$ in a fixed configuration $(D,z,w)$, which we suppress from the notation.
As in Section~\ref{sec:measures}, we consider this as a Borel measure on the space $\K$ that is carried on naturally parametrized curves.
If $Q\subseteq D$ is a square, or a finite union of squares, we shall denote by $\Theta(Q)$ the time $\gamma$ spends in $Q$ under the natural parametrization, as defined in~\eqref{eq:np}, and let
\[
d\mu_Q = \Theta(Q)\,d\mu,
\]
which is chordal \SLEk\ weighted by the natural time spent in $Q$.
Since $\Theta(\ell)=0$ with probability one for any line $\ell$, as follows from~\eqref{eq:npmoment}, we have
\begin{equation}\label{additive}
\mu_Q=\sum_i\mu_{Q_i}
\end{equation}
if $Q=\bigcup_i Q_i$ and no two squares $Q_i,Q_j$ share an interior point.

As one might expect, the law $\mu_Q^\#$ of chordal \SLEk\ biased by the time spent in a small square $Q$ is close (in the sense of weak convergence) to the law $\mu_z^\#$ of two-sided radial \SLEk\ through any point $z\in Q$.  
This estimate will be made precise in Section~\ref{sec:measuresclose}. 
However, the two measures are mutually singular, and if we want to make any estimates involving Radon--Nikodym derivatives, we need to restrict our attention to the sections of the curve before and after all approaches to $Q$.

The cut-off point that we choose for a square $Q$ is the circle $C(Q)$ with the same center as $Q$ and twice its diameter. (Any fixed ratio of diameters larger than $1$ would have worked for this argument.) 
Then we consider the curve $\gamma$ up to the first time $\rho_Q$ that it hits $C(Q)$, that is,
\[
\rho_Q=\tau_{C(Q)},\quad\text{where}\quad \tau_X = \inf\{t:\gamma(t)\in X\}.
\]

We shall frequently need to consider the reversed curve $\tilde\gamma$ defined by $\tilde\gamma(t)=\gamma(t_\gamma-t)$ and $t_{\tilde\gamma}=t_\gamma$. It is of course true that the map $\gamma\mapsto\tilde\gamma$ is continuous. Reversibility of chordal and two-sided radial \SLEk\ states that if $\gamma$ is an \SLEk\ curve in $D$ from $z$ to $w$ (through $\zeta$, if two-sided) then $\tilde\gamma$ is an \SLEk\ curve in $D$ from $w$ to $z$ (through $\zeta$, if two-sided).

As we will also need to stop the reversed curve $\tilde\gamma$ before it approaches $Q$, we designate the corresponding stopping time by
\[
\tilde\rho_Q=\tilde\tau_{C(Q)},\quad\text{where}\quad
\tilde\tau_X=\inf\{t:\tilde\gamma(t)\in X\}.
\]
Note that this is a stopping time for the reversed curve $\tilde\gamma$ but not for $\gamma$ itself. Nevertheless, the combination of the domain Markov property, reversibility and the uniformity of our escape estimates allows us to deduce the ``whole-curve'' escape estimates that we need.

The first estimate we give is a mild generalization of the two-sided radial escape estimate that appears in \cite[Theorem~1.3]{FL}.

\begin{prop}\label{2srescape}
If $\kappa\le4$, there exists $c<\infty$ such that the following holds.
Let $\gamma$ be a two-sided radial \SLEk\ curve from $z$ to $w$ through $\zeta$ in a domain $D$ that contains $\D$, where $z,w\in\p D\cap\p \D$ and $|\zeta|\le1/2$.
Then for all $r\ge 1$,
\[
\Prob\{\gamma\cap r\,\p\D\ne\emptyset\} \le c\, r^{-(4a-1)/2}.
\]
\end{prop}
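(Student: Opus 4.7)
The plan is to mirror the proof of \cite[Theorem~1.3]{FL}, with only minor adjustments for the extra flexibility in the configuration. Three ingredients drive the argument: reversibility of two-sided radial \SLEk\ (valid for $\kappa\le4$); the Girsanov/Doob-$h$ identification of two-sided radial \SLEk\ as a tilt of chordal \SLEk\ by the Green's function martingale; and Beurling's projection theorem. By reversibility, the event $\{\gamma\cap r\p\D\ne\emptyset\}$ decomposes as the union of ``the first half of $\gamma$ meets $r\p\D$'' and the analogous event for the reversed curve~$\tilde\gamma$; these share the same bound by reversibility (with $z,w$ swapped), so it suffices to bound $\Prob\{\tau_r<\tau_\zeta\}$, where $\tau_r=\inf\{t:|\gamma(t)|=r\}$.

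Since chordal \SLEk\ a.s.\ does not hit $\zeta$, the Doob-$h$ description yields
\[
\Prob\{\tau_r<\tau_\zeta\} = \frac{1}{G_D(\zeta)}\,\E_\mu\!\bigl[\mathbf{1}_{\tau_r<\infty}\,G_{D_{\tau_r}}(\zeta)\bigr],
\]
where $\mu$ is chordal \SLEk\ in $D$. Via~\eqref{eq:ghalf} and conformal covariance, $G_{D_t}(\zeta) = c_\kappa|f_t'(\zeta)|^{2-d}Y_t^{d-2}(\sin\theta_t)^{4a-1}$ with $f_t:D_t\to\Half$ the centered Loewner map and $\theta_t=\arg f_t(\zeta)$. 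By Koebe's theorem this is comparable to $\dist(\zeta,\p D_t)^{d-2}(\sin\theta_t)^{4a-1}$, and the hypothesis $\dist(\zeta,\p D)\ge 1/2$ (which follows from $D\supset\D$ and $|\zeta|\le1/2$) controls the first factor provided the curve stays away from $\zeta$. On $\{\tau_r<\infty\}$ the chordal curve $\gamma[0,\tau_r]$ is a connected set joining $\p\D$ to the circle $r\p\D$, so Beurling's projection theorem bounds its harmonic measure from $\zeta$ in $D_{\tau_r}$ by $C/\sqrt r$; translating to the conformal image gives $(\sin\theta_{\tau_r})^{4a-1}\le C\,r^{-(4a-1)/2}$.

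The main obstacle is making the estimate uniform in the configuration. Two delicate regimes arise: (i) the chordal curve may approach $\zeta$, making $\dist(\zeta,\p D_{\tau_r})$ small so the distance factor blows up; and (ii) $G_D(\zeta)$ itself may be small when $\zeta$ lies, in harmonic measure from $D$, close to one of the two arcs of $\p D$ separating $z$ and $w$. The standard fix is to first stop at $\sigma=\tau_r\wedge\inf\{t:\dist(\zeta,\gamma(t))\le 1/4\}$ and apply the clean combined estimate on $\{\sigma=\tau_r\}$, then use the domain Markov property of two-sided radial \SLEk\ together with reversibility to bootstrap on $\{\sigma<\tau_r\}$. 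Following \cite{FL}, these cases combine to yield the desired uniform bound $c\,r^{-(4a-1)/2}$.
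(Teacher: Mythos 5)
Your proposal is correct in outline but takes a genuinely different route from the paper's proof. The paper's proof is short: for the initial segment $\gamma[0,\tau_\zeta]$ it directly invokes Corollary~5.6 of~\cite{FL}, which bounds the probability that two-sided radial \SLEk\ stopped at $\zeta$ crosses a crosscut $\eta$ by $c\,\exc_D(\tilde\gamma,\eta)^{4a-1}$; summing over crosscuts and applying the Beurling estimate for \emph{excursion measure} gives $c\,r^{-(4a-1)/2}$, and then reversibility is used to handle the terminal segment and the two bounds are added. You instead propose to re-derive the one-sided estimate from scratch: the reversibility decomposition into $\gamma[0,\tau_\zeta]$ and $\tilde\gamma[0,\tilde\tau_\zeta]$ is identical, but you unwind the Doob-$h$ construction to write $\Prob\{\tau_r<\tau_\zeta\}$ as a chordal \SLEk\ expectation of $G_{D_{\tau_r}}(\zeta)/G_D(\zeta)$, factor the Green's function via Koebe into a distance factor and a $(\sin\theta)^{4a-1}$ factor, and apply Beurling's projection theorem for \emph{harmonic measure}. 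This is essentially the internal machinery underlying \cite[Theorem~1.3 and Corollary~5.6]{FL} rather than a citation of the result, and it exposes exactly the two delicate regimes (the curve approaching $\zeta$, and a small $G_D(\zeta)$ when $\zeta$ has small $\sin\arg$) that the cited corollary already handles internally. Both approaches are valid; the paper's is more economical since the required one-sided crosscut-crossing bound is already stated in~\cite{FL} in a form uniform over configurations, while your route needs the stopping-time/bootstrap argument (which you correctly flag but leave to~\cite{FL}) to make it rigorous. One small point of care in your sketch: the $(\sin\theta_{\tau_r})^{4a-1}$ factor must be compared against $G_D(\zeta)$, not just bounded above, since $G_D(\zeta)$ need not be bounded below under the stated hypotheses ($D$ may be far from round away from $\D$); this is precisely your regime~(ii), so your proposed bootstrap is needed there as well, not only for the distance factor.
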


\begin{proof}
First consider the initial segment $\gamma[0,\tau_\zeta]$ of the two-sided radial \SLEk\ curve stopped when it hits $\zeta$. Let $S$ be the set of crosscuts of $D$ that are subsets of the circle $r\,\p\D$ and let $\tilde\gamma$ be the line segment from $z$ to $\zeta$. If $\eta\in S$, Corollary~5.6 in~\cite{FL} shows that
\[
\Prob\{\gamma[0,\tau_\zeta]\cap \eta\ne\emptyset\}
\le c\sum_{\eta\in S} \exc_D(\tilde\gamma,\eta)^{4a-1}.
\]
It follows that
\begin{multline*}
\Prob\{\gamma[0,\tau_\zeta]\cap r\,\p\D\ne\emptyset\}
\le c\,\sum_{\eta\in S}\exc_D(\tilde\gamma,\eta)^{4a-1}
\le c\biggl(\sum_{\eta\in S}\exc_D(\tilde\gamma,\eta)\biggr)^{4a-1}\\
\le c\bigl(2\,\exc_D(\tilde\gamma,r\,\p\D)\bigr)^{4a-1}
\le c\,\exc_D(\p\D,r\,\p\D)^{4a-1}
\le c\,e^{-\frac12(4a-1)},
\end{multline*}
where the last inequality follows from the Beurling estimate.

Now we know that the reversal of two-sided radial \SLEk\ in~$D$ from~$z$ to~$w$ through~$\zeta$ is two-sided radial \SLEk\ in~$D$ from~$w$ to~$z$ through~$\zeta$, so the same argument proves that
\[
\Prob\{\tilde\gamma[0,\tilde\tau_\zeta]\cap r\,\p\D\ne\emptyset\}\le c\,r^{-\frac12(4a-1)}.
\]
Since with probability one $\zeta$ is visited exactly once by $\gamma$, so that $\tilde\tau_\zeta=t_\gamma\sm\tau_\zeta$, we may add the two estimates obtained above to yield
\[
\Prob\{\gamma\cap r\,\p\D\ne\emptyset\}\le c\,r^{-\frac12(4a-1)}.\qedhere
\]
\end{proof}

The escape estimate for length-biased chordal \SLEk\ is more complicated because it is not as clear exactly how the bias affects the local behaviour. We start with an estimate on the Green's function for chordal \SLEk, which will allow us to prove that away from the square of interest, length-biased chordal \SLEk\ behaves just like two-sided radial \SLEk.

\begin{prop}[Main Estimate]\label{main-estimate}
If $\kappa\le4$, there exists $c<\infty$ such that the following holds.
Let $(D,z,w)$ be a chordal configuration and $\zeta,\zeta'\in D$ with
\[
r:=\frac{|\zeta-\zeta'|}{\dist(\zeta,\p D)}\le\frac12.
\]
Then
\begin{equation}\label{mainest}
\biggl|\log\frac{G_{D,z,w}(\zeta)}{G_{D,z,w}(\zeta')}\biggr|
\le c\,r.
\end{equation}
\end{prop}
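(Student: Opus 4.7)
\medskip
\noindent\textbf{Proof proposal.} The plan is to reduce to the half-plane via the conformal covariance of the Green's function. Let $f : D \to \Half$ be a conformal map sending $(z, w)$ to $(0, \infty)$, and write $\xi = f(\zeta)$, $\xi' = f(\zeta')$. Then
\[
\log \frac{G_{D,z,w}(\zeta)}{G_{D,z,w}(\zeta')}
= (2-d)\log \frac{|f'(\zeta)|}{|f'(\zeta')|}
+ \log \frac{G_\Half(\xi)}{G_\Half(\xi')},
\]
so it suffices to bound each term on the right by a constant times $r$.

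For the half-plane term, the explicit formula~\eqref{eq:ghalf} gives
\[
\log G_\Half(\xi) = \mathrm{const} + (d-2)\log \Im \xi + (4a-1)\log \sin \arg \xi.
\]
Since $|\nabla \log \Im \xi| = 1/\Im \xi$ and $\log \sin \arg \xi = \log \Im \xi - \log |\xi|$ has gradient bounded by $1/\Im \xi + 1/|\xi| \le 2/\Im \xi$, we get $|\nabla_\xi \log G_\Half(\xi)| \le C/\Im \xi$ with $C$ depending only on $\kappa$. Integrating along the straight segment from $\xi$ to $\xi'$, which lies in $\Half$ and on which $\Im \cdot \ge \tfrac12 \Im \xi$ provided $|\xi - \xi'| \le \tfrac12 \Im \xi$, yields
\[
\biggl|\log \frac{G_\Half(\xi)}{G_\Half(\xi')}\biggr| \le C\,\frac{|\xi - \xi'|}{\Im \xi}.
\]

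The remaining work is to bound $|\xi - \xi'|/\Im \xi$ and $\bigl|\log |f'(\zeta)/f'(\zeta')|\bigr|$ by multiples of $r$; both follow from standard Koebe distortion estimates. Koebe's one-quarter theorem applied to $f^{-1}$ at $\xi$ gives $\dist(\zeta, \partial D) \asymp |f'(\zeta)|^{-1} \Im \xi$, and the Koebe distortion bounds, valid on the ball of radius $\tfrac12 \Im \xi$ about $\xi$ and on the ball of radius $\tfrac12 \dist(\zeta, \partial D)$ about $\zeta$, give $|f'(\zeta')| \asymp |f'(\zeta)|$ together with $|\zeta - \zeta'| \asymp |f'(\zeta)|^{-1} |\xi - \xi'|$, with absolute implicit constants. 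Dividing produces $|\xi - \xi'|/\Im \xi \le C r$, which in particular confirms that $|\xi-\xi'| \le \tfrac12 \Im \xi$ when $r$ is small enough (so the half-plane estimate applies). For the derivative factor, the distortion theorem implies the pointwise bound $|(\log f')'(u)| \le C/\dist(u, \partial D)$, and integrating along the segment from $\zeta$ to $\zeta'$, on which $\dist(\cdot, \partial D) \ge \tfrac12 \dist(\zeta, \partial D)$, yields $\bigl|\log |f'(\zeta)/f'(\zeta')|\bigr| \le C r$. Substituting both bounds into the decomposition above proves~\eqref{mainest}.

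The main obstacle is the bookkeeping of the distortion constants: one must verify that the constants produced are absolute (depending only on $\kappa$ through $d$ and $4a-1$), and that the hypothesis $r \le \tfrac12$ is enough to keep both integration segments inside the regions where the gradient and distortion estimates are uniform. The comparison $|\xi-\xi'|/\Im\xi \asymp r$ is the delicate point, since it uses the distortion theorem in both directions; but the normalization of $f$ plays no role, as every bound in sight is invariant under post-composition with affine maps of $\Half$.
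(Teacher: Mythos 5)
Your proposal is correct in outline and is precisely the ``alternative proof'' that the paper mentions (but does not carry out) in the remark following Proposition~\ref{main-estimate}: pull back to $\Half$ via conformal covariance, estimate the derivative factor by Koebe distortion, and estimate $G_\Half$ by differentiating the explicit formula~\eqref{eq:ghalf}. The paper's own proof stays in $D$: it writes $G_D = c_\kappa\, S_D^{4a-1}\crad_D^{d-2}$, handles the sine-of-argument factor $S_D$ by derivative and Harnack estimates for positive harmonic functions, and handles $\crad_D$ via the Brownian-motion characterization $\log\crad_D(\zeta)=\E^\zeta\log|B_\tau-\zeta|$, comparing exit distributions from $\zeta$ and $\zeta'$. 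The two decompositions are formally identical (the half-plane formula is just the conformal-covariance identity in disguise), but the paper's version avoids all distortion bookkeeping for the uniformizing map by replacing it with two self-contained harmonic/Brownian estimates, while your version reduces everything to a single elementary gradient computation in $\Half$ at the cost of the Koebe constants you rightly identify as the delicate point.

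One detail in your write-up needs tightening. You integrate along the straight segment from $\xi$ to $\xi'$ and keep $\Im \ge \tfrac12\Im\xi$ \emph{provided} $|\xi-\xi'|\le\tfrac12\Im\xi$; your distortion bound only gives $|\xi-\xi'|/\Im\xi\le Cr$ for an absolute $C$ that is larger than $1$, so the hypothesis $r\le\tfrac12$ does not by itself put you in that regime. The cleanest fix is to drop the requirement $|\xi-\xi'|\le\tfrac12\Im\xi$ entirely: the same distortion estimates that give $|f'(\zeta')|\asymp|f'(\zeta)|$ and $\dist(\zeta',\p D)\asymp\dist(\zeta,\p D)$ also give $\Im\xi'\asymp\Im\xi$ with absolute constants, and since $\Im$ is affine on the straight segment its minimum there is $\min(\Im\xi,\Im\xi')\gtrsim\Im\xi$. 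This lets you integrate $|\nabla\log G_\Half|\lesssim1/\Im$ over the segment and still obtain $\bigl|\log G_\Half(\xi)/G_\Half(\xi')\bigr|\lesssim|\xi-\xi'|/\Im\xi\lesssim r$ for all $r\le\tfrac12$. (Alternatively, prove the bound for $r\le r_0$ with $r_0$ small and absolute, then chain along $\lceil r/r_0\rceil = O(1)$ intermediate points on the segment from $\zeta$ to $\zeta'$.) With that adjustment the argument is complete.
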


\begin{proof}
The explicit formula~\ref{eq:ghalf} for the Green's function in the upper half-plane implies that
\[
G_{D,z,w}(\zeta') = c_\kappa\, S_{D,z,w}(\zeta')^{4a-1}\,
\crad_D(\zeta')^{d-2},
\]
where $\crad_D(\zeta')$ is the conformal radius of $D$ around $\zeta'$ and $S_{D,z,w}(\zeta')=\sin\arg f(\zeta')$ under any conformal map $f$ sending $D,z,w$ to $\Half,0,\infty$ respectively.
Let $d=\dist(\p D,\zeta)$.
Derivative and Harnack estimates for the positive harmonic function $S_{D,z,w}$ yield
\[
|\nabla S_{D,z,w}(\zeta')|\le c\, \frac{S_{D,z,w}(\zeta')}{d}
\]
when $|\zeta'-\zeta|<\frac12d$ and therefore
\[
\biggl|\log\frac{S_{D,z,w}(\zeta')}{S_{D,z,w}(\zeta)}\biggr|
\le c\,r.
\]

To estimate the conformal radius factor, we use the well-known characterization
\[
\log\crad_D(\zeta')=\E^{\zeta'}\log|B_{\tau}-\zeta'|
\]
for a complex Brownian motion $B$ started at $\zeta'$ and stopped at time $\tau=\inf\{t:B_t\notin D\}$ (see, e.g., \cite[Corollary~3.34]{Lbook}).

Because $|B_{\tau}-\zeta|\ge d$ we have
\[
\biggl|\log\frac{|B_{\tau}-\zeta'|}{|B_{\tau}-\zeta|}\biggr|
\le c\,r.
\]
Furthermore, the distributions of $B_{\tau}$ when the Brownian motion is started from $\zeta$ and from $\zeta'$ are mutually absolutely continuous with Radon--Nikodym derivative $e^{O(r)}$, as can be seen by stopping $B$ first on the circle centered at $\zeta$ of radius $d$.
By applying a similarity transformation, we may assume that $d=1$ and $\zeta=0$. We then have $\log|B_{\tau}|\ge 0$ and therefore
\[
e^{-cr}\,\E^0\log|B_{\tau}|\le \E^{\zeta'}\log|B_{\tau}|
\le e^{cr}\,\E^0\log|B_{\tau}|.
\]
Since Koebe's 1/4-theorem implies that $\E^0\log|B_{\tau}|\le c$ we have that
\[
\bigl|\E^{\zeta'}\log|B_{\tau}|-\E^0\log|B_{\tau}|\bigr|
\le c\,r.
\]
We may conclude that
\[
\biggl|\log\frac{\crad_D(\zeta')}{\crad_D(\zeta)}\biggr|\le c\,r.
\]
The proposition follows from the estimates we have given on the conformal radius and $S_{D,z,w}$ factors.
\end{proof}

An alternative proof of Proposition~\ref{main-estimate} proceeds by estimating the Green's function in $\Half$ after applying conformal covariance and distortion estimates for the map to $\Half$.

For the remaining propositions we fix a chordal configuration $(D,z,w)$ and let $\mu$ denote chordal \SLEk\ in that configuration. If $Q\subset D$ is a square and $\zeta\in Q$, let $\mu_\zeta$ and $\mu_Q$ denote two-sided radial \SLEk\ through $\zeta$ and chordal \SLEk\ weighted by the time spent in $Q$ respectively.

One of the most important consequences for us of this main estimate is the fact that length-biased chordal \SLEk\ is essentially the same as two-sided radial \SLEk, at least before the curve comes too close to the set whose natural parametrization is in question.
To summarize the result, it states that chordal \SLEk\ biased by the time spent in a square $Q$ and stopped (on either side) before coming closer to $Q$ than $R$ times its diameter is mutually absolutely continuous with two-sided radial SLE through any point in $Q$, with Radon-Nikodym derivative uniformly of order $1+O(1/R)$.
The following proposition states the result precisely.

\begin{prop}\label{prop:comparison}
If $\kappa\le4$, there exists $c<\infty$ such that the following holds.
Let $Q$ be a square of diameter $r_0$ and let $C$ be a circle with the same center as $Q$ and radius $r\ge r_0$.
Let $\zeta\in Q$.
Let $(D,z,w)$ be a chordal SLE configuration such that $C\subset \ol D$.
In this configuration, let $\gamma$ be an \SLEk\ curve, considered under the probability measures $\mu_\zeta^\#$ of two-sided radial \SLEk\ through $\zeta$ and $\mu_Q^\#$ of $Q$-length-biased chordal \SLEk.

Then, considered as measures on the pair $\bigl(\gamma[0,\tau_C],\tilde\gamma[0,\tilde\tau_C]\bigr)$, the Radon--Nikodym derivative satisfies
\begin{equation}\label{rnd}
\biggl|\log\frac{d\mu_Q^\#}{d\mu_\zeta^\#}\biggr|
\le c\, \frac{r_0}{r}.
\end{equation}
Moreover, a similar estimate applies to the total masses
\[\|\mu_Q\|=\int_Q\,G_{D,z,w}(\zeta')\,dA(\zeta')\] 
and $\|\mu_\zeta\|=G_{D,z,w}(\zeta)$:
\begin{equation}\label{tmass}
\biggl|\log\frac{\|\mu_Q\|}{\|\mu_\zeta\|\,A(Q)}\biggr|
\le c\, \frac{r_0}{r}.
\end{equation}
\end{prop}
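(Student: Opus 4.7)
The plan is to identify the Radon--Nikodym derivatives of $\mu_\zeta$ and of $\mu_Q$ with respect to chordal \SLEk\ $\mu$, restricted to the $\sigma$-algebra generated by the pair $(\gamma[0,\tau_C],\tilde\gamma[0,\tilde\tau_C])$, and then invoke Proposition~\ref{main-estimate} and normalize. Write $\gamma^1 = \gamma[0,\tau_C]$, $\gamma^3 = \tilde\gamma[0,\tilde\tau_C]$, and $D' = D\sm\gamma^1\sm\gamma^3$, and let $\gamma^2$ denote the middle segment of $\gamma$, which lies in $D'$ and joins $\gamma(\tau_C)$ to $\tilde\gamma(\tilde\tau_C)$. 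Since $\gamma^1\cup\gamma^3$ lies outside the open disk enclosed by $C$ and $Q$ lies inside that disk, neither $\gamma^1$ nor $\gamma^3$ meets $Q$.

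First I would show that on $\sigma(\gamma^1,\gamma^3)$,
\[
\frac{d\mu_\zeta}{d\mu} = G_{D'}(\zeta),
\]
by a two-stage conditioning. On $\sigma(\gamma^1)$, optional stopping for the Girsanov martingale from~\eqref{2srdefn} at time $\tau_C$ gives density $G_{D\sm\gamma^1}(\zeta)$. Given $\gamma^1$, the remainder of the curve under $\mu_\zeta$ is two-sided radial \SLEk\ in $D\sm\gamma^1$ from $\gamma(\tau_C)$ to $w$ through $\zeta$. By reversibility of two-sided radial \SLEk\ in $D\sm\gamma^1$ (a consequence of Zhan's chordal reversibility together with the $h$-transform construction of two-sided radial \SLEk), $\gamma^3$ is the initial segment of the time-reversal, and an analogous optional stopping argument applied to this reversed curve supplies an additional factor $G_{D'}(\zeta)/G_{D\sm\gamma^1}(\zeta)$. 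The two factors telescope to $G_{D'}(\zeta)$.

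For $d\mu_Q$ on the same $\sigma$-algebra, the domain Markov property combined with reversibility gives that, under $\mu$ and conditional on $(\gamma^1,\gamma^3)$, the middle segment $\gamma^2$ has the law of chordal \SLEk\ in $D'$ from $\gamma(\tau_C)$ to $\tilde\gamma(\tilde\tau_C)$. Because $\gamma^1\cup\gamma^3$ misses $Q$, we have $\Theta(Q)=\Cont_d(\gamma^2\cap Q)$, so~\eqref{eq:npmoment} applied to this middle SLE yields
\[
\frac{d\mu_Q}{d\mu}\bigg|_{\sigma(\gamma^1,\gamma^3)} = \E^\mu[\Theta(Q)\mid\gamma^1,\gamma^3] = \int_Q G_{D'}(\zeta')\,dA(\zeta').
\]
Dividing,
\[
\frac{d\mu_Q}{d\mu_\zeta}\bigg|_{\sigma(\gamma^1,\gamma^3)} = \frac{1}{G_{D'}(\zeta)}\int_Q G_{D'}(\zeta')\,dA(\zeta').
\]
For $\zeta'\in Q$ we have $|\zeta-\zeta'|\le r_0$, and since $\p D'\subseteq\p D\cup\gamma^1\cup\gamma^3$ and each of these pieces sits on or outside $C$ while $\zeta$ lies within $r_0/2$ of the center of $C$, we also have $\dist(\zeta,\p D')\ge r-r_0/2\ge r/2$. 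When $r$ exceeds a suitable constant multiple of $r_0$, Proposition~\ref{main-estimate} applied in $D'$ gives $G_{D'}(\zeta')/G_{D'}(\zeta) = e^{O(r_0/r)}$ uniformly in $\zeta'\in Q$, whence $\int_Q G_{D'}(\zeta')\,dA(\zeta') = A(Q)\,G_{D'}(\zeta)\,e^{O(r_0/r)}$. Repeating the argument in $D$ itself (using $C\subset\ol D$ to bound $\dist(\zeta,\p D)\ge r-r_0/2$) establishes~\eqref{tmass}; dividing the unnormalized ratio by that normalization gives~\eqref{rnd}. When $r$ is only a bounded multiple of $r_0$, the target bound $c\,r_0/r$ is itself bounded below by a positive constant, so one enlarges $c$ to cover this regime.

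The main obstacle is the identification $d\mu_\zeta/d\mu = G_{D'}(\zeta)$ on $\sigma(\gamma^1,\gamma^3)$. Since $\tilde\tau_C$ is not a stopping time for the forward filtration of $\gamma$, one cannot stop the chordal Girsanov martingale $G_{D_t}(\zeta)$ at $\tilde\tau_C$ directly. The two-stage argument sidesteps this by invoking reversibility inside the slit domain $D\sm\gamma^1$ and applying the martingale tilt to the reversed curve---exactly the synthesis of domain Markov property and reversibility that the section introduction advertises as central to our method.
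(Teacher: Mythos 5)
Your proof is correct and rests on the same ingredients as the paper's---the domain Markov property, Zhan's reversibility, and Proposition~\ref{main-estimate}---but organizes the computation slightly differently. The paper first establishes the Radon--Nikodym bound for the initial segment $\gamma[0,\tau_C]$ alone, identifying the unnormalized densities as $\int_Q G_{D\sm\gamma[0,\tau_C],\gamma(\tau_C),w}(\zeta')\,dA(\zeta')$ and $G_{D\sm\gamma[0,\tau_C],\gamma(\tau_C),w}(\zeta)$ and applying the main estimate in the singly-slit domain, and then uses reversibility to transfer the same bound to the terminal segment conditionally on the initial one, multiplying the two factors. You instead identify both densities directly on $\sigma(\gamma^1,\gamma^3)$ via the Green's functions of the doubly-slit domain $D'=D\sm\gamma^1\sm\gamma^3$, which requires the additional observation (itself a consequence of domain Markov plus reversibility) that the conditional law of the middle arc given $(\gamma^1,\gamma^3)$ under $\mu$ is chordal \SLEk\ in $D'$, and then applies the main estimate once in $D'$. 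The two routes are essentially equivalent, with yours being somewhat more streamlined. One point to tighten in both: Proposition~\ref{main-estimate} requires $|\zeta-\zeta'|/\dist(\zeta,\p D')\le\tfrac12$, which holds only once $r$ exceeds a fixed multiple of $r_0$; for $r\asymp r_0$ one still needs a Harnack-type chaining bound on the Green's function ratio, since merely enlarging $c$ (as your closing sentence suggests) presupposes that the ratio is finite, which is precisely what must be shown. This gap is also present in the paper itself (Corollary~\ref{newcor} invokes~\eqref{rnd} with $r=r_0$), so it is not a defect peculiar to your argument.
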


\begin{proof}
The total mass estimate~\eqref{tmass} follows immediately from~\eqref{mainest}.

For the Radon--Nikodym derivative estimate~\eqref{rnd}, it suffices to prove the estimate considering the measures only on the initial segment $\gamma[0,\tau_C]$ and ignoring the final segment $\tilde\gamma[0,\tilde\tau_C]$. The reason for this is as follows. For chordal \SLEk\ conditioned on the initial segment $\gamma[0,\tau_C]$ (and assuming that $\tau_C<\infty$, which holds with probability one under $\mu_Q^\#$ and $\mu_\zeta^\#$), the remainder of the curve $\gamma[\tau_C,t_\gamma]$ follows the law of chordal \SLEk\ in the configuration $(D\sm\gamma[0,\tau_C],\gamma(\tau_C),w)$. The reversal of $\gamma[\tau_C,t_\gamma]$ conditionally follows the law of chordal \SLEk\ in the configuration $(D\sm\gamma[0,\tau_C],w,\gamma(\tau_C))$, by reversibility of chordal \SLEk. Weighting by $\Theta(Q)$ shows that, under $\mu_Q^\#$, the reversal of $\gamma[\tau_C,t_\gamma]$ conditionally follows the law of $Q$-length-biased chordal \SLEk\ in the $(D\sm\gamma[0,\tau_C],w,\gamma(\tau_C))$. A similar argument using reversibility of two-sided radial \SLEk\ shows that, under $\mu_\zeta^\#$, the reversal of $\gamma[\tau_C,t_\gamma]$ conditionally follows the law of two-sided radial \SLEk\ through $\zeta$ in the configuration $(D\sm\gamma[0,\tau_C],w,\gamma(\tau_C))$. We may therefore apply the initial-segment estimate to the measures on the terminal segment $\tilde\gamma[0,\tilde\tau_C]$ \emph{conditional} on the initial segment $\gamma[0,\tau_C]$. Multiplying the Radon--Nikodym derivatives obtained for the measures on the initial segment $\gamma[0,\tau_C]$ and, conditional on that initial segment, on the terminal segment $\tilde\gamma[0,\tilde\tau_C]$, we may conclude that the Radon-Nikodym derivative estimate~\eqref{rnd} applies to the measures on the pair $\bigl(\gamma[0,\tau_C],\tilde\gamma[0,\tilde\tau_C]\bigr)$.

It remains to prove the estimate~\eqref{rnd} considering the measures only on the initial segment $\gamma[0,\tau_C]$.
If we consider only this segment, note that $\mu_Q$ is the same measure as chordal \SLEk\ weighted by the conditional expectation
\[
X:=\E\bigl(\Theta(Q)\,1\{\tau_C<\infty\}\bigm|\gamma[0,\tau_C]\bigr)
= \int_Q G_{D\sm\gamma[0,\tau_C],\gamma(\tau_C),w}(\zeta')\,dA(\zeta').
\]
The integral expression given here follows from~\eqref{eq:np} and the domain Markov property for chordal \SLEk.
Moreover, two-sided radial \SLEk\ $\mu_\zeta$ is the same measure as chordal \SLEk\ weighted by
\[
Y:=G_{D\sm\gamma[0,\tau_C],\gamma(\tau_C),w}(\zeta),
\]
as follows directly from the definition~\eqref{2srdefn}.
By the main estimate~\eqref{mainest},
\[
\biggl|\log\frac{X}{Y\,A(Q)}\biggr|\le c\,\frac{r_0}{r}.
\]
Dividing by the total masses to obtain the probability measures $\mu_Q^\#$ and $\mu_\zeta^\#$, and using~\eqref{tmass}, we obtain~\eqref{rnd} for the initial segment $\gamma[0,\tau_C]$.
\end{proof}

An easy consequence is the escape estimate for $Q$-length-biased chordal \SLEk\ \emph{away from}~$Q$.

\begin{cor}\label{newcor}
If $\kappa\le4$, there exists $c<\infty$ such that the following holds.
Let $\gamma$ be a chordal \SLEk\ curve from $z$ to $w$ in a domain $D$ that contains $\D$, where $z,w\in\p D\cap\p \D$, \emph{weighted} by the natural time spent in a square $Q\subset\frac12\D$. Then for all $r\ge 1$,
\begin{equation}\label{lbcescape}
\Prob\bigl\{(\gamma[0,\rho_Q]\cup\tilde\gamma[0,\tilde\rho_Q])\cap r\,\p\D\ne\emptyset\bigr\} \le c\, r^{-(4a-1)/2}.
\end{equation}
\end{cor}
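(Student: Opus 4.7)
The plan is to deduce this from Proposition~\ref{2srescape} via the Radon--Nikodym comparison of Proposition~\ref{prop:comparison}. First I would fix an arbitrary reference point $\zeta\in Q$. Since $Q\subset\frac12\D$, we have $|\zeta|\le 1/2$, so the hypotheses of Proposition~\ref{2srescape} apply to two-sided radial \SLEk\ through $\zeta$ in $(D,z,w)$ and give, for $r\ge 1$,
\[
\mu_\zeta^\#\bigl\{\gamma\cap r\,\p\D\ne\emptyset\bigr\}\le c\,r^{-(4a-1)/2}.
\]
Because $\gamma[0,\rho_Q]\cup\tilde\gamma[0,\tilde\rho_Q]\subset\gamma$, the event
\[
E:=\bigl\{(\gamma[0,\rho_Q]\cup\tilde\gamma[0,\tilde\rho_Q])\cap r\,\p\D\ne\emptyset\bigr\}
\]
is contained in $\{\gamma\cap r\,\p\D\ne\emptyset\}$, so $\mu_\zeta^\#(E)\le c\,r^{-(4a-1)/2}$.

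Next I would transfer this bound to $\mu_Q^\#$ by applying Proposition~\ref{prop:comparison} with $C=C(Q)$, so that the ratio $r_0/r$ appearing there equals~$1$. By definition $\rho_Q=\tau_{C(Q)}$ and $\tilde\rho_Q=\tilde\tau_{C(Q)}$, hence $E$ is measurable with respect to the pair $\bigl(\gamma[0,\tau_{C(Q)}],\tilde\gamma[0,\tilde\tau_{C(Q)}]\bigr)$; on the $\sigma$-algebra generated by this pair Proposition~\ref{prop:comparison} gives $|\log(d\mu_Q^\#/d\mu_\zeta^\#)|\le c$, and therefore $d\mu_Q^\#/d\mu_\zeta^\#\le e^c$. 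Consequently,
\[
\mu_Q^\#(E)\le e^c\,\mu_\zeta^\#(E)\le c'\,r^{-(4a-1)/2},
\]
which is~\eqref{lbcescape} after absorbing $e^c$ into the constant.

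There is no real obstacle here: the two preceding propositions do all the work and the argument is essentially a one-line combination. The only things to check are the mild hypothesis verifications, namely that $Q\subset\frac12\D$ yields $|\zeta|\le 1/2$ so that Proposition~\ref{2srescape} applies, and that $E$ is measurable with respect to the stopped pair so that the Radon--Nikodym comparison of Proposition~\ref{prop:comparison} is valid on it. Both are immediate from the definitions.
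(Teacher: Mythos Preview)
Your proof is correct and essentially identical to the paper's: both fix $\zeta\in Q$, invoke Proposition~\ref{2srescape} for $\mu_\zeta^\#$, apply Proposition~\ref{prop:comparison} with $C=C(Q)$ (so that $r=r_0$ in that proposition's notation) to obtain a uniformly bounded Radon--Nikodym derivative on the $\sigma$-algebra of the stopped pair, and transfer the escape bound to $\mu_Q^\#$. Your explicit remark that $E$ is measurable with respect to $\bigl(\gamma[0,\tau_{C(Q)}],\tilde\gamma[0,\tilde\tau_{C(Q)}]\bigr)$ makes the argument slightly more precise than the paper's brief sketch.
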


\begin{proof}
Let $\zeta\in Q$, and apply~\eqref{rnd} to $\mu_Q^\#$ and $\mu_\zeta^\#$ with the circle $C=C(Q)$ (so that $r=r_0$). We see that the Radon--Nikodym derivative between the two probability measures is uniformly bounded away from $0$ and $\infty$. Since the escape estimate was established for $\mu_\zeta^\#$ in Proposition~\ref{2srescape}, it carries over on the initial and final segments to yield~\eqref{lbcescape}.
\end{proof}

More work is needed to establish the escape estimate for the \emph{entire} length-biased chordal \SLEk\ curve, which we state in the following theorem.

\begin{thm}\label{alltime}
If $\kappa\le4$, there exists $c<\infty$ such that the following holds.
Let $\gamma$ be a chordal \SLEk\ curve from $z$ to $w$ in a domain $D$ that contains $\D$, where $z,w\in\p D\cap\p \D$, \emph{weighted} by the natural time spent in a square $Q\subset\frac12\D$. Then for all $r\ge 1$,
\[
\Prob\bigl\{\gamma\cap r\,\p\D\ne\emptyset\bigr\} \le c\, r^{-(4a-1)/2}.
\]
\end{thm}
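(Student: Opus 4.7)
The strategy is to apply Proposition~\ref{prop:comparison} with a circle $C$ large enough that $r\,\p\D$ lies strictly inside the disk it bounds, reducing the whole-curve event $\{\gamma\cap r\,\p\D\ne\emptyset\}$ to an event measurable with respect to the initial segment $\gamma[0,\tau_C]$, and then to invoke Proposition~\ref{2srescape} for two-sided radial \SLEk.

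The case $r=1$ is trivial, since $\gamma(0)=z\in\p\D=r\,\p\D$ forces the event to have probability one, and the bound holds with any $c\ge 1$. For $r>1$, let $\zeta_0$ be the center of $Q$, so that $|\zeta_0|\le 1/2$ and $r_0\le 1$, and take $C$ to be the circle centered at $\zeta_0$ of radius $R=r+1$. Every point of $C$ has modulus at least $R-|\zeta_0|\ge r+1/2>r$, so $r\,\p\D$ lies strictly inside the open disk bounded by $C$. I would then establish the geometric identity
\[
\{\gamma\cap r\,\p\D\ne\emptyset\}=\{\gamma[0,\tau_C]\cap r\,\p\D\ne\emptyset\}
\]
as follows: the inclusion $\supseteq$ is trivial, and for $\subseteq$ note that $\gamma$ starts at $z\in\p\D\subset r\,\D$, so if $\tau_C<\infty$ the curve must cross $r\,\p\D$ before reaching $\gamma(\tau_C)\in C$ by the intermediate value theorem, while if $\tau_C=\infty$ the initial segment is already the entire curve.

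Since the right-hand event is measurable with respect to $\gamma[0,\tau_C]$, Proposition~\ref{prop:comparison} (after marginalising its pair-wise Radon--Nikodym bound to the initial segment, a form that is established inside its proof) lets me compare $\mu_Q^\#$ with $\mu_\zeta^\#$ for any fixed $\zeta\in Q$: the derivative is bounded by $e^{cr_0/R}\le e^c$ since $r_0/R\le 1/2$. Proposition~\ref{2srescape}, applicable because $|\zeta|\le 1/2$, bounds $\Prob_{\mu_\zeta^\#}\{\gamma\cap r\,\p\D\ne\emptyset\}$ by a constant times $r^{-(4a-1)/2}$, and combining these yields
\[
\Prob_{\mu_Q^\#}\{\gamma\cap r\,\p\D\ne\emptyset\}\le e^c\,\Prob_{\mu_\zeta^\#}\{\gamma\cap r\,\p\D\ne\emptyset\}\le c'\,r^{-(4a-1)/2}.
\]

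The main obstacle I anticipate is the hypothesis $C\subset\ol D$ required by Proposition~\ref{prop:comparison}, which can fail when the distance from $\zeta_0$ to $\p D$ is smaller than $r+1$. To handle this regime I would either iterate the argument along a dyadic chain of circles centered at $\zeta_0$ with radii doubling from $r_0$ up to the largest value still admissible in $\ol D$, using the domain Markov property under $\mu_Q$ to pass estimates from one shell to the next, or else observe that once $r\,\p\D$ exits the accessible part of $D$ the event is vacuous, so only finitely many values of $r$ require separate treatment.
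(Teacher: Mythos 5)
Your reduction of the whole-curve event to the initial segment $\gamma[0,\tau_C]$ via the large circle $C$ of radius $r+1$ is a correct set identity, but the comparison step that follows is invalid, and this is the heart of the matter rather than the $C\subset\ol D$ hypothesis you flag. Proposition~\ref{prop:comparison} compares $\mu_Q^\#$ and $\mu_\zeta^\#$ only on the portion of the curve \emph{before it has approached $Q$}: its proof rewrites $\mu_Q$ and $\mu_\zeta$, restricted to $\gamma[0,\tau_C]$, as chordal \SLEk\ weighted by $\int_Q G_{D\sm\gamma[0,\tau_C],\gamma(\tau_C),w}$ and by $G_{D\sm\gamma[0,\tau_C],\gamma(\tau_C),w}(\zeta)$ respectively, which presupposes that all the natural time in $Q$ (resp.\ the visit to $\zeta$) occurs \emph{after} $\tau_C$. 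That holds when $C$ separates $z$ and $w$ from $Q$, as in the intended application. With your circle the situation is reversed: $z,w\in\p\D$ and $Q\subset\frac12\D$ all lie inside the disk bounded by $C$, so $\gamma[0,\tau_C]$ typically contains every visit to $Q$; indeed $\tau_C=\infty$ with positive probability, in which case $\gamma[0,\tau_C]$ is the entire curve. On that segment $\mu_Q^\#$ and $\mu_\zeta^\#$ are mutually singular (one passes through $\zeta$ almost surely, the other almost surely does not) --- the paper states this explicitly just before introducing $C(Q)$ --- so no bound of the form $|\log\frac{d\mu_Q^\#}{d\mu_\zeta^\#}|\le c\,r_0/R$ can hold there. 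Your two proposed repairs do not address this: the dyadic chain of circles faces the same dichotomy (small circles give valid Radon--Nikodym comparisons but do not capture the whole-curve event; large circles capture the event but give no comparison), and the claim that the event becomes vacuous once $r\,\p\D$ leaves $D$ is false, since the curve can still meet the part of $r\,\p\D$ inside $D$, and in any case one needs a single constant uniform over all $r$ and all admissible $D$.

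The genuine difficulty is precisely the intermediate segment $\gamma[\rho_Q,\,t_\gamma-\tilde\rho_Q]$ between the first and last visits to the small circle $C(Q)$, which Corollary~\ref{newcor} leaves uncontrolled. The paper handles it by subdividing $Q$ into $2^{2k}$ dyadic subsquares and invoking Lemma~\ref{compactness}: since $\gamma$ is simple, for $k$ large enough (depending on the realization) any intersection of $\gamma$ with $r\,\p\D$ must occur during the initial or final segment relative to \emph{each} subsquare $Q'$, so the escape event is the increasing union of the events $E_k$ to which Corollary~\ref{newcor} applies subsquare by subsquare; the additivity $\mu_Q=\sum_{Q'}\mu_{Q'}$ then lets one average the per-subsquare bounds. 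Some device of this kind is needed, and your argument as written does not supply one.
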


The idea of the proof of Theorem~\ref{alltime} is to subdivide $Q$ into $2^{2k}$ equal subsquares, to apply Corollary~\ref{newcor} to each of those subsquares, and to average the resulting estimates.
It seems, however, that after dividing $Q$ into smaller squares, we still have the problem of understanding the intermediate segments of $\gamma$ that actually pass through those squares.

The observation that solves this problem is that after subdividing $Q$ a finite number of times \emph{that depends on $\gamma$}, there can be no escape during those intermediate segments.
To make the argument precise, we need the following lemma, which states that it suffices to subdivide $Q$ a finite (but $\gamma$-dependent, i.e., random) number of times.

\begin{lem}\label{compactness}
Let $Q$ be a square and
 $\gamma$ a simple curve starting and ending outside $C(Q)$.
Suppose that $\gamma$ intersects a closed set $\eta$ that lies outside $C(Q)$.
There is a positive integer $k$ such that, if $\mathcal Q_k$ is the set of $2^{2k}$ equal subsquares of $Q$ obtained by subdividing $Q$ dyadically $k$ times, then for each $Q'\in\mathcal Q_k$,
either the initial segment $\gamma[0,\rho_{Q'}]$ or the terminal segment $\tilde\gamma[0,\tilde\rho_{Q'}]$ intersects~$\eta$.
\end{lem}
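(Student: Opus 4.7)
The plan is a compactness argument that exploits the simplicity of $\gamma$: the part of $\gamma$ before it visits $\eta$ and the part after lie at positive distance from each other once both are intersected with the closed disk bounded by $C(Q)$, and once the subsquares have diameter smaller than that distance, no $C(Q')$ can meet both parts.

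Fix a time $t_0\in[0,t_\gamma]$ with $\gamma(t_0)\in\eta$, and let $F$ denote the closed disk bounded by $C(Q)$. Set
\[
K_1=\gamma([0,t_0])\cap F,\qquad K_2=\gamma([t_0,t_\gamma])\cap F;
\]
both are compact. Since $\gamma$ is simple, $\gamma([0,t_0])\cap\gamma([t_0,t_\gamma])=\{\gamma(t_0)\}$; but $\gamma(t_0)\in\eta$ lies outside $C(Q)$ by hypothesis, hence outside $F$. Therefore $K_1\cap K_2=\emptyset$ and
\[
\delta:=\dist(K_1,K_2)>0,
\]
with the convention $\delta=\infty$ if one of the $K_i$ is empty.

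For every $k\ge1$ and $Q'\in\mathcal Q_k$, the center of $Q'$ lies within $\tfrac12\diam(Q)$ of the center of $Q$, and $C(Q')$ has radius $\diam(Q')=2^{-k}\diam(Q)$, so $C(Q')\subseteq F$. Now choose $k$ large enough that $2\diam(Q')<\delta$. Since each circle $C(Q')$ then has Euclidean diameter strictly less than $\delta$, it cannot contain both a point of $K_1$ and a point of $K_2$; equivalently, at least one of $\gamma([0,t_0])$ and $\gamma([t_0,t_\gamma])$ is disjoint from $C(Q')$.

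If $\gamma([0,t_0])$ is disjoint from $C(Q')$, then $\rho_{Q'}>t_0$ (interpreting $\rho_{Q'}=t_\gamma$ when $\gamma$ never hits $C(Q')$, in which case $\gamma[0,\rho_{Q'}]$ is all of $\gamma$), and so $\gamma[0,\rho_{Q'}]$ contains $\gamma(t_0)\in\eta$. Symmetrically, if $\gamma([t_0,t_\gamma])$ is disjoint from $C(Q')$, then $\tilde\gamma[0,\tilde\rho_{Q'}]$ contains $\tilde\gamma(t_\gamma-t_0)=\gamma(t_0)\in\eta$. The only delicate point is ensuring $\delta>0$, which rests on simplicity of $\gamma$ together with the fact that $\eta$ is strictly outside $C(Q)$; once that is in hand, the rest is a routine diameter comparison and the same $k$ works simultaneously for all the $2^{2k}$ subsquares.
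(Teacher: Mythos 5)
Your argument is correct. The one point that needs care --- that $\delta>0$ --- is handled properly: since $\gamma$ is injective, $\gamma([0,t_0])\cap\gamma([t_0,t_\gamma])=\{\gamma(t_0)\}$, and $\gamma(t_0)\in\eta$ lies strictly outside the disk $F$, so the two compact sets $K_1,K_2$ are disjoint. The verification that $C(Q')\subseteq F$ for $k\ge1$ and the endgame (disjointness of $C(Q')$ from one half of the curve forces $\rho_{Q'}>t_0$ or $\tilde\rho_{Q'}>t_\gamma-t_0$, so that half's marked point $\gamma(t_0)$ survives into the corresponding segment) are both sound, and your explicit convention for the case where $\gamma$ never meets $C(Q')$ is consistent with how the lemma is used.

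Your route differs from the paper's in the decomposition it separates by $\delta$. The paper covers the compact time set $\{t:\gamma(t)\in\ol D\}$ by finitely many disjoint intervals $I_1,\dots,I_n$ whose images avoid $\eta$, takes $\delta$ to be the minimum pairwise distance between the images $\gamma(I_j)$, and concludes that the \emph{entire} intermediate segment $\gamma[\rho_{Q'},t_\gamma-\tilde\rho_{Q'}]$ avoids $\eta$ (if it met $\eta$, the first-entry and last-exit times of $C(Q')$ would sit in distinct intervals whose images both touch the small circle $C(Q')$). You instead split the curve at a single time $t_0$ with $\gamma(t_0)\in\eta$ and separate only the two resulting arcs inside the disk. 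Your version is simpler --- it avoids the finite covering argument entirely and needs only a two-set distance --- at the cost of proving slightly less: you track only the one chosen point of $\gamma\cap\eta$ rather than showing the intermediate segment misses $\eta$ altogether. Since the lemma's conclusion (and its use in Theorem~\ref{alltime}) only requires that the initial or terminal segment meet $\eta$, this weaker conclusion is enough, and your proof is a valid, more economical substitute.
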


\begin{proof}
Let $D$ be the disk bounded by $C(Q)$.
Since the set $S:=\{t:\gamma(t)\in \ol D\}$ is compact, we may find finitely many disjoint time intervals $I_1,\ldots,I_n$ 
that cover $S$ and such that 
the images $\gamma(I_1),\ldots,\gamma(I_n)$ do not intersect $\eta$.
Since $\gamma$ is simple, there is a minimum distance $\delta>0$ 
between any pair $\gamma(I_j),\gamma(I_k)$ of these images.
Now consider a dyadic subdivision $\mathcal Q_k$ of $Q$ 
into squares of side length less than $\delta/(2\sqrt2)$.
Fix any square $Q'\in\mathcal Q_k$ and observe that $\diam C(Q')<\delta$.
It follows that the intermediate segment~$\gamma[\rho_{Q'},t_\gamma-\tilde\rho_{Q'}]$ 
cannot intersect $\eta$.
For if the intermediate segment intersected $\eta$, there would be distinct time intervals $I_j$ and $I_k$ such that $\gamma(I_j)$ and $\gamma(I_k)$ both contained points inside $C(Q')$, implying that $\dist(\gamma(I_j),\gamma(I_k))<\delta$ which is impossible by our construction.
Since $\gamma$ intersects $\eta$, 
the initial segment $\gamma[0,\rho_{Q'}]$ or final segment $\tilde\gamma[0,\tilde\rho_{Q'}]$ (or both) must therefore intersect $\eta$.
\end{proof}

\begin{proof}[Proof of Theorem~\ref{alltime}]
Let $E$ denote the event that $\gamma\cap r\,\p\D\ne\emptyset$.
Introduce the dyadic subdivisions $\mathcal Q_k$ of $Q$, as in the proof of Lemma~\ref{compactness}.
Let $E_k$ denote the event that for each $Q'\in\mathcal Q_k$, either the initial segment $\gamma[0,\rho_{Q'}]$ or the terminal segment $\tilde\gamma[0,\tilde\rho_{Q'}]$ (or both) intersects~$\eta$.
This is an increasing sequence of events and
by Lemma~\ref{compactness}, $\bigcup_k E_k=E$.

We have the estimate~\eqref{lbcescape} for each subsquare $Q'\in\mathcal Q_k$.
The additivity of $\mu_Q$ across disjoint subsquares~\eqref{additive} implies that the probability of the event $E_k$ under $\mu_Q^\#$ is a weighted average of the probabilities under $\mu_{Q'}^\#$ where $Q'$ runs through the subsquares:
\[
\mu_Q^\#(E_k)=\sum_{Q'\in\mathcal Q_k} 
\frac{\|\mu_{Q'}\|}{\|\mu_Q\|} \,\mu_{Q'}^\#(E_k)
\]
By~\eqref{lbcescape}, we have 
\[\mu_{Q'}^\#(E_k)\le c\, r^{-\frac12(4a-1)}\] 
for each $Q'\in\mathcal Q_k$,
with a constant $c$ that depends only on $\kappa$.
Taking a weighted average, we also have
\[\mu_Q^\#(E_k)\le c\,r^{-\frac12(4a-1)}.\] 
Since $\mu_Q^\#(E_k)\uparrow\mu_Q^\#(E)$, the escape estimate for the entire curve $\gamma$ under $Q$-length-biased chordal \SLEk\ follows.
\end{proof}

We now know that chordal SLE biased by the time spent in a small square $Q$ is unlikely to escape far from a circle $C$ around $Q$ between the first and last times that it visits $C$.

We remark that the statements of Proposition~\ref{prop:comparison} and Theorem~\ref{alltime} remain valid if $Q$ is a rectangle rather than a square, with the same proofs.

The next section will show that the time spent while near the square is also small, with high probability.

\section{Time spent near a small square}

The main tools for bounding the time spent near a small
square are the up-to-constant bounds on the two-point
Green's function established by Lawler and Rezaei
in~\cite{basicpropnew}.
Because the expected natural time spent by SLE in a region
is simply the integral of the Green's function over that
region, these bounds provide a simple first-moment bound on
the time spent that suffices for the proof of
Theorem~\ref{thm:lengthbias}.

\begin{thm}\label{11sep}
If $\kappa<8$, there exists $c<\infty$ such that the
following holds.
Let $D=(D,z,w)$ be a configuration and $U\subset D$ a disk such that
$\dist(U,\p D)\ge 3\diam(U)$.
Let $\gamma$ be either two-sided radial \SLEk\ through a
point $\zeta\in U$ or chordal \SLEk\ weighted by the time spent
in a subset $S$ of $U$.
Then $\E[\Theta(U)]\asymp\diam(U)^d$.
In particular, $\Prob\{\Theta(U)>\epsilon\}<\epsilon$ as
long as $\diam(U)<c_1\epsilon^{2/d}$, where $c_1$ is a
constant depending only on $\kappa$.
\end{thm}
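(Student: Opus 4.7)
The plan is to compute $\E[\Theta(U)]$ in each of the two cases via the first-moment formula~\eqref{eq:npmoment} together with its two-point analogue, and then to estimate the resulting Green's function integrals using the up-to-constant bounds of Lawler and Rezaei~\cite{basicpropnew}.

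First I would express the expected time as an integral of the two-point Green's function $G_D(\zeta,\zeta')$ for chordal \SLEk. Since $\mu_\zeta$ is chordal \SLEk\ tilted by $G_D(\cdot)$, applying~\eqref{eq:npmoment} to the tilted measure yields
\[
\int \Theta(U)\, d\mu_\zeta = \int_U G_D(\zeta,\zeta')\, dA(\zeta'),\qquad \|\mu_\zeta\| = G_D(\zeta).
\]
For chordal \SLEk\ weighted by $\Theta(S)$, the second-moment identity gives
\[
\int \Theta(U)\, d\mu_S = \int_S\!\int_U G_D(\zeta,\zeta')\, dA(\zeta)\, dA(\zeta'),\qquad \|\mu_S\| = \int_S G_D(\zeta)\, dA(\zeta).
\]
In each case $\E[\Theta(U)]$ under the normalized probability measure is the ratio of the two displayed expressions.

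Next I would substitute the bulk two-point bound from~\cite{basicpropnew}. The hypothesis $\dist(U,\p D)\ge 3\diam(U)$ places $\zeta,\zeta'\in U$ in the bulk, so a Harnack-type estimate gives $G_D(\zeta)\asymp G_D(\zeta_0)$ uniformly for $\zeta\in U$ relative to a reference point $\zeta_0$, while the Lawler--Rezaei bound gives, up to constants depending only on $\kappa$,
\[
G_D(\zeta,\zeta')\asymp G_D(\zeta)\,|\zeta-\zeta'|^{d-2}.
\]
Since $d<2$ (as $\kappa<8$), polar integration gives $\int_U |\zeta-\zeta'|^{d-2}\, dA(\zeta)\asymp \diam(U)^d$ for every $\zeta'\in U$. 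Plugging into the ratios above, the Harnack-constant factors of $G_D$ and the $|S|$ or $|U|$ area factors cancel between numerator and denominator, leaving $\E[\Theta(U)]\asymp \diam(U)^d$ in both cases.

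The ``in particular'' statement then follows from Markov's inequality: choosing $c_1$ so that $c\,c_1^d<1$, where $c$ is the implicit constant from the first-moment bound, the hypothesis $\diam(U)<c_1\epsilon^{2/d}$ gives $\Prob\{\Theta(U)>\epsilon\}\le \E[\Theta(U)]/\epsilon\le c\,\diam(U)^d/\epsilon<\epsilon$. I expect the only substantive step to be verifying that the Lawler--Rezaei two-point estimate in~\cite{basicpropnew} is applicable uniformly in $D$ under the geometric hypothesis on $U$; granted that, the rest is a routine polar integration.
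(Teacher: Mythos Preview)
Your approach is the same as the paper's: express $\E[\Theta(U)]$ as a ratio of two-point to one-point Green's function integrals, invoke the Lawler--Rezaei up-to-constants bound $G(\zeta,\omega)\asymp G(\zeta)\,|\zeta-\omega|^{d-2}$ in the bulk, integrate in polar coordinates, and finish with Markov's inequality. The weighted-chordal case and the subsequent estimate are handled exactly as you describe.

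The one place where you are too quick is the two-sided radial formula $\int\Theta(U)\,d\mu_\zeta=\int_U G_D(\zeta,\zeta')\,dA(\zeta')$. Saying ``apply~\eqref{eq:npmoment} to the tilted measure'' is circular: \eqref{eq:npmoment} is for chordal \SLEk, and what you need is precisely that the one-point Green's function of $\mu_\zeta^\#$ at $\zeta'$ equals $G_D(\zeta,\zeta')/G_D(\zeta)$, which is the claim to be proved. The paper supplies this as a separate lemma: decompose $\Theta(U)=\Theta_1+\Theta_2$ into the time spent before and after $\tau_\zeta$, use the domain Markov property to get $\E[\Theta_2\mid\F_{\tau_\zeta}]=\int_U G_{D_{\tau_\zeta},\zeta,w}(\omega)\,dA(\omega)$, take expectations to obtain $\E[\Theta_2]=\int_U \hat G(\zeta,\omega)/G(\zeta)\,dA(\omega)$ with the \emph{ordered} two-point Green's function, then invoke reversibility of two-sided radial \SLEk\ to handle $\Theta_1$ symmetrically, and finally sum the two ordered Green's functions to get the unordered one. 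You should either reproduce this argument or give an honest limit argument linking the two-point Green's function to the Green's function of the tilted process; as written, that step is a gap.
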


The theorem follows by a simple first moment estimate,
which rests on the following lemma.
Denote by $G(\zeta,\omega)$ the (Euclidean) unordered two-point Green's function for chordal \SLEk\ as considered in~\cite{minkowski},
\[
G(\zeta,\omega)=\lim_{\epsilon,\delta\to0}(\epsilon\delta)^{d-2}\,
\Prob\{\dist(\gamma,\zeta)<\epsilon,\dist(\gamma,\omega)<\delta\}.
\]

\begin{lem}\label{lem11sep}
Under the hypotheses of Theorem~\ref{11sep},
\[
\E[\Theta(U)] =
\frac{\int_U \int_U G(\zeta,\omega) \, dA(\omega) \, d\lambda(\zeta)}
{\int_U G(\zeta) \, d\lambda(\zeta)},
\]
where $\lambda$ is a point mass at $\zeta$ in the case of
two-sided radial SLE through $\zeta$, or the area measure on
the subset $S$ in the weighted chordal case.
\end{lem}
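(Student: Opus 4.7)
The plan is to derive the lemma from the two-point natural-time identity
\[
\E_\mu[\Theta(V_1)\,\Theta(V_2)] = \int_{V_1}\int_{V_2} G(\zeta,\omega)\,dA(\omega)\,dA(\zeta),
\]
valid for suitable Borel sets $V_1, V_2\subset D$. This identity is itself obtained by expressing each $\Theta(V_i)$ as a limit of $\delta_i^{d-2}\,\Area\{\omega\in V_i : \dist(\omega,\gamma)<\delta_i\}$, pushing the limit outside $\E_\mu$ via the moment bounds on tube areas established in~\cite{minkowski}, and matching the resulting expression against the double-limit definition of $G(\zeta,\omega)$; this is essentially one of the main results of~\cite{minkowski}.

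For the weighted chordal case I would simply unfold the definitions. Since $d\mu_S = \Theta(S)\,d\mu$, the two-point identity gives
\[
\E_{\mu_S}[\Theta(U)] = \E_\mu\bigl[\Theta(S)\,\Theta(U)\bigr] = \int_S\int_U G(\zeta,\omega)\,dA(\omega)\,dA(\zeta),
\]
while the total mass of $\mu_S$ is $\E_\mu[\Theta(S)] = \int_S G(\zeta)\,dA(\zeta)$ by~\eqref{eq:npmoment}. Dividing yields the lemma with $\lambda$ taken to be the area measure on $S$.

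For the two-sided radial case, the key observation is that the symmetric two-point Green's function doubles as the one-point Green's function for $\mu_\zeta$: unwinding the tilting definition~\eqref{2srdefn} at the stopping times $\tau_n$ gives
\[
\lim_{\delta\to 0}\delta^{d-2}\,\mu_\zeta\{\dist(\gamma,\omega)<\delta\} = G(\zeta,\omega),
\]
which is consistent with the symmetric double-limit definition of $G(\zeta,\omega)$ under chordal SLE. Re-running the natural-parametrization argument of~\cite{minkowski} with $\mu$ replaced by $\mu_\zeta$ then yields $\E_{\mu_\zeta}[\Theta(U)] = \int_U G(\zeta,\omega)\,dA(\omega)$, and dividing by $|\mu_\zeta|=G(\zeta)$ produces the claimed formula with $\lambda$ equal to a unit point mass at $\zeta$.

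The main obstacle I anticipate is justifying the exchange of the $\delta$-limit with expectations under the two-sided radial measure $\mu_\zeta$, because the tube-area moment bounds in~\cite{minkowski} are written for ordinary chordal SLE. I would handle this by splitting $U$ into a piece bounded away from $\zeta$, on which the Radon--Nikodym comparison of Proposition~\ref{prop:comparison} together with Proposition~\ref{main-estimate} reduces control of $\mu_\zeta$-moments to chordal moments, and a small neighbourhood of $\zeta$, on which a direct first-moment bound using the sharp two-point Green's function estimates of~\cite{basicpropnew} is enough to control the integrand in the present linear (first-moment) statement.
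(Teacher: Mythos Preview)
Your weighted chordal case matches the paper's argument essentially verbatim.

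For the two-sided radial case you take a genuinely different route, and there is a gap in the step you flag as ``unwinding the tilting definition~\eqref{2srdefn}.'' The Radon--Nikodym description~\eqref{2srdefn} only determines the law of the \emph{initial} segment $\gamma[0,\tau_n]$ (and in the limit $\gamma[0,\tau_\zeta]$); the second half $\gamma[\tau_\zeta,t_\gamma]$ is defined separately as chordal \SLEk\ in the slit domain. Consequently, the full-curve event $\{\dist(\gamma,\omega)<\delta\}$ cannot be evaluated by tilting alone, and your displayed identity $\lim_{\delta\to0}\delta^{d-2}\mu_\zeta\{\dist(\gamma,\omega)<\delta\}=G(\zeta,\omega)$ is not yet justified. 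To handle the contribution from $\gamma[\tau_\zeta,t_\gamma]$ you are forced either to re-establish the Minkowski-content moment machinery of~\cite{minkowski} under $\mu_\zeta$ (the programme you sketch), or to decompose the curve at $\tau_\zeta$ --- and the latter is exactly what the paper does.

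The paper's proof avoids all of this by writing $\Theta(U)=\Theta_1+\Theta_2$ for the time spent in $U$ before and after $\tau_\zeta$. Conditioning on $\gamma[0,\tau_\zeta]$, the domain Markov property gives $\E[\Theta_2\mid\F_{\tau_\zeta}]=\int_U G_{D_{\tau_\zeta},\zeta,w}(\omega)\,dA(\omega)$, whence $\E[\Theta_2]=\int_U \hat G(\zeta,\omega)\,dA(\omega)/G(\zeta)$ with $\hat G$ the \emph{ordered} two-point Green's function. Reversibility of two-sided radial \SLEk\ then gives the same for $\Theta_1$ with the arguments of $\hat G$ swapped, and the identity $G(\zeta,\omega)=\hat G(\zeta,\omega)+\hat G(\omega,\zeta)$ finishes. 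This reduces everything to chordal \SLEk\ in slit domains, so the existing results of~\cite{minkowski,multipoint} apply directly and no limit exchanges under $\mu_\zeta$ are needed. Your route could in principle be completed, but the paper's decomposition is both shorter and sidesteps the technical obstacle you yourself identify.
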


\begin{proof}
From Zhan's proof in~\cite{ZhanC} that chordal \SLEk\ is reversible for $\kappa\le4$, it follows that two-sided radial \SLEk\ is also reversible; that is, the reversal of two-sided radial \SLEk\ in $D$ from $z$ to $w$ through $\zeta$ has the law of two-sided radial \SLEk\ in $D$ from $w$ to $z$ through $\zeta$.
To use this fact, we decompose $\Theta(U)$ (for fixed $U$) as the sum of $\Theta_1$ and $\Theta_2$, these being the natural time spent by the two-sided radial \SLEk\ curve $\gamma$ in $U$ before and after visiting $\zeta$, respectively.
The law of $\gamma[\tau_\zeta,t_\gamma]$, conditional on the initial segment of $\gamma$ up to the time $\tau_\zeta$ that it hits $\zeta$, is simply chordal \SLEk\ in the slit domain from $\zeta$ to $w$.
Hence the conditional expected time spent in $U$ is
\[
\E[\Theta_2\mid\F_{\tau_\zeta}] = \int_U G_{D_{\tau_\zeta},\zeta,w}(\omega)\,dA(\omega).
\]
Taking an unconditional expectation,
\[
\E[\Theta_2] = \int_U \E[G_{D_{T_\zeta},\zeta,w}(\omega)]\,dA(\omega)
= \int_U \frac{\hat G(\zeta,\omega)}{G(\zeta)} \,dA(\omega)
\]
where $\hat G(\zeta,\omega)$ is the (Euclidean) ordered two-point Green's function for chordal \SLEk~\cite{multipoint,minkowski} in the configuration $(D,z,w)$.
Applying reversibility of two-sided radial SLE, we may similarly write
\[
\E[\Theta_1] = \int_U \frac{\hat G(\omega,\zeta)}{G(\zeta)}\,dA(\omega).
\]
Since the unordered Green's function is the sum of the two ordered Green's functions for that pair of points~\cite{multipoint}, the lemma follows in the two-sided radial case.

In the case of chordal \SLEk\ weighted by the time spent in a subset $S$ of $U$, which we denote $\mu_S$, we have
\[
\E_{\mu_S}[\Theta(U)] = \frac{\E[\Theta(U)\Theta(S)]}{\int_S G(\zeta)\,dA(\zeta)}
= \frac{\int_S \int_U G(\zeta,\omega)\,dA(\omega)\,dA(\zeta)}{\int_S G(\zeta)\,dA(\zeta)}.
\]
The last equality follows from the relation
\[
\E[\Theta(A)^2]=\int_A\int_A G(\zeta,\omega)\,dA(\omega)\,dA(\zeta),
\]
proved in~\cite{minkowski}, and linearity.
\end{proof}

\begin{proof}[Proof of Theorem~\ref{11sep}]
We first recall the estimate on the unordered two-point Green's function that appeared in~\cite{basicpropnew}.
Namely, for chordal \SLEk\ in $\Half$ from $0$ to $\infty$, if $\beta=4a-1+d-2=\frac\kappa8+\frac8\kappa-2$ and $|\zeta|<|\omega|$ with $q=|\zeta-\omega|/|\omega|$, then
\[ G(\zeta,\omega)\asymp G(\zeta)\, G(\omega)\, q^{d-2}\, (S(\zeta)\vee q)^{-\beta}. \]
Substituting the known form of $G(\omega)$, we see that
\[ G(\zeta,\omega)\asymp G(\zeta)\, |\zeta-\omega|^{d-2}\, \Bigl(\frac q{S(\omega)}\vee 1\Bigr)^{-\beta}. \]
From the assumption that $\dist(U,\p D)\ge 3\diam(U)$, we can use distortion estimates to see that the last factor is of constant order in our case.

Return to considering \SLEk\ in $D$ from $p$ to $q$.
It is now simple to see that, for any $z\in U$,
\[
\int_U G(\zeta,\omega)\,dA(\omega)
\asymp G(\zeta)\int_U |\zeta-\omega|^{d-2}\,dA(\omega)
\asymp \diam(U)^d.
\]
By Lemma~\ref{lem11sep} and the fact that $G(\zeta)\asymp G(\omega)$ for $\zeta,\omega\in U$, we see that under two-sided radial or weighted chordal \SLEk\ as per the statement,
\[
\E[\Theta(U)]\asymp \diam(U)^d. \qedhere
\]
\end{proof}

\section{Estimates on the distance between length-biased chordal \SLEk\ and the aggregate of two-sided radial \SLEk}
\label{sec:measuresclose}

The integral of measures is defined as the limit of its Riemann sum approximations as the mesh of the partition tends to zero (see Section~\ref{sec:measures}). 
We shall prove directly that length-biased chordal \SLEk\ is the limit of these Riemann sums.

To make this argument, we introduce a metric $d_S$ on finite measures $\mu,\nu$ on $\K$ (the space of parametrized curves introduced in Section~\ref{sec:measures}), given by
\[
d_S(\mu,\nu)=\biggl|\log\frac{\|\mu\|}{\|\nu\|}\biggr|
+\dd(\mu^\#,\nu^\#).
\]
The advantage of this metric over the Prokhorov metric $\dd$ introduced earlier is that it captures the notion of relative error and is stable (up to a constant) under addition.
Indeed, we have the following lemma.
\begin{lem}\label{ds-additive}
Suppose $I$ is a finite set and, for $i\in I$, $\mu_i$ and $\nu_i$ are measures in $\K$. If $d_S(\mu_i,\nu_i)\le\epsilon\le1$ for all $i\in I$, then
\[
d_S\biggl(\sum_i\mu_i,\sum_i\nu_i\biggr)
\le 18\,\epsilon.
\]
\end{lem}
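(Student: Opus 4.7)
The plan is to split $d_S(\sum_i \mu_i, \sum_i \nu_i)$ into its two defining contributions—the log-ratio of total masses, and the Prokhorov distance $\dd$ between the normalized measures—and estimate each in terms of $\epsilon$ using the tools already assembled in Section~\ref{sec:measures}.

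First I would handle the mass term. Writing $M = \sum_i \|\mu_i\|$ and $N = \sum_i \|\nu_i\|$, the hypothesis $d_S(\mu_i,\nu_i) \le \epsilon$ forces $e^{-\epsilon}\|\nu_i\| \le \|\mu_i\| \le e^\epsilon\|\nu_i\|$ for every $i$. Summing preserves the inequalities, so $|\log(M/N)| \le \epsilon$.

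For the normalized piece I would introduce the mixing weights $a_i = \|\mu_i\|/M$ and $b_i = \|\nu_i\|/N$, so that $\bigl(\sum_i \mu_i\bigr)^\# = \sum_i a_i \mu_i^\#$ and $\bigl(\sum_i \nu_i\bigr)^\# = \sum_i b_i \nu_i^\#$. A triangle inequality through the intermediate measure $\sum_i a_i \nu_i^\#$ separates two sources of error. The first piece,
\[
\dd\Bigl(\textstyle\sum_i a_i \mu_i^\#,\ \sum_i a_i \nu_i^\#\Bigr) \le \sup_i \dd(\mu_i^\#,\nu_i^\#) \le \epsilon,
\]
follows from Lemma~\ref{lem:sum1}, whose proof uses only the Prokhorov metric definition and hence carries over verbatim from $\dc$ to $\dd$. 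The second piece is controlled by additivity of the Prokhorov metric~\eqref{prokhadd} together with the scaling identity $\dd(a\rho, b\rho) = |a-b|$ for a probability measure $\rho$ (the identity $d(\mu, a\mu) = (a-1)|\mu|$ recorded just after~\eqref{prokhadd}), giving
\[
\dd\Bigl(\textstyle\sum_i a_i \nu_i^\#,\ \sum_i b_i \nu_i^\#\Bigr) \le \sum_i |a_i - b_i|.
\]

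To finish, I would bound $\sum_i |a_i - b_i|$. From the mass estimate, $a_i / b_i = (\|\mu_i\|/\|\nu_i\|)(N/M) \in [e^{-2\epsilon}, e^{2\epsilon}]$, so $|a_i - b_i| \le b_i(e^{2\epsilon}-1)$ and hence $\sum_i |a_i - b_i| \le e^{2\epsilon} - 1$, which is at most $16\epsilon$ for $\epsilon \le 1$ (by convexity of $t \mapsto e^{2t}-1$ on $[0,1]$). Adding the three contributions yields $d_S \le \epsilon + \epsilon + 16\epsilon = 18\epsilon$, as required. The one substantive design choice is the intermediate measure $\sum_i a_i \nu_i^\#$, which cleanly separates the per-component error (handled by Lemma~\ref{lem:sum1}) from the mixing-weight mismatch (handled by a total-variation-type bound); past that the argument is routine bookkeeping with constants and I foresee no real obstacle.
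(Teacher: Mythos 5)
Your proof is correct but takes a genuinely different route from the paper's. The paper argues directly from the Prokhorov definition: it multiplies the per-component inequality $\mu_i^\#(A)\le\nu_i^\#(A^\epsilon)+\epsilon$ by $\|\mu_i\|$, sums over $i$ using the mass-ratio bound $\|\mu_i\|\le e^\epsilon\|\nu_i\|$, and then renormalizes, producing $\mu^\#(A)\le e^{2\epsilon}\nu^\#(A^\epsilon)+\epsilon\le\nu^\#(A^\epsilon)+17\epsilon$. You instead triangulate through the intermediate measure $\sum_i a_i\nu_i^\#$ (with $a_i=\|\mu_i\|/M$): one leg is a fixed-weight comparison handled by Lemma~\ref{lem:sum1} (correctly transported from $\dc$ to $\dd$, since its proof is metric-agnostic), and the other is a pure mixing-weight mismatch handled by the additivity~\eqref{prokhadd} together with the scaling identity $\dd(a\rho,b\rho)=|a-b|$ for a probability measure $\rho$, which is essentially a total-variation bound. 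Your decomposition is more modular and makes the two sources of error — per-component discrepancy versus weight mismatch — conceptually explicit, and it reuses existing lemmas; the paper's computation is shorter but more opaque about where the $e^{2\epsilon}$ comes from. Both land on $18\epsilon$. Two small notes: the identity $d(\mu,a\mu)=(a-1)|\mu|$ actually appears in the proof of the lemma preceding~\eqref{prokhadd}, not after it; and your convexity argument in fact gives the sharper bound $e^{2\epsilon}-1\le(e^2-1)\epsilon<7\epsilon$, so you have room to spare in the constant.
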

\begin{proof}
Write $\mu=\sum_i\mu_i$ and $\nu=\sum_i\nu_i$.
It is clear that
\[\biggl|\log\frac{\|\mu\|}{\|\nu\|}\biggr|\le\epsilon.\]
For any $i\in I$ and Borel set $A\subset \K$, we have $\mu_i^\#(A)\le\nu_i^\#(A^\epsilon)+\epsilon$ and therefore
\[\mu_i(A)\le\frac{\|\mu_i\|}{\|\nu_i\|}\nu_i(A^\epsilon)+\|\mu_i\|\epsilon.\]
Using that $\|\mu_i\|\le e^\epsilon\,\|\nu_i\|$ and summing over $i$, we have the bound
\[\mu(A)\le e^\epsilon\,\nu(A^\epsilon)+\|\mu\|\,\epsilon
\le \nu(A^\epsilon) +c\,\|\mu\|\,\epsilon\]
which translates readily into the following bound for the probability measure:
\[
\mu^\#(A) 
\le \nu^\#(A^\epsilon)\, e^\epsilon\, \frac{\|\mu\|}{\|\nu\|} + \epsilon
\le \nu^\#(A^\epsilon)\, e^{2\epsilon} + \epsilon
\le \nu^\#(A^\epsilon) + 17\,\epsilon,
\]
where the last inequality uses that $e^{2\epsilon}-1 \le 2\epsilon\,(e^2-1)< 16\epsilon$.
Swapping $\mu$ and $\nu$, we conclude that $\dd(\mu^\#,\nu^\#)\le 17\,\epsilon$.
\end{proof}

The key idea to the proof is that, as suggested by Proposition~\ref{prop:comparison}, there is not much difference between chordal \SLEk\ weighted by the time spent in a square $Q$ and two-sided radial SLE through a point $\zeta\in Q$, at least for the initial and final segments away from that square.
The remaining segment connecting the initial and terminal segments is short (in space and time) with high probability.
We assemble these results in the following proposition.

\begin{prop}\label{smallscale}
If $\kappa\le4$, there exist $\alpha>0$, $c<\infty$ such that the following holds.
Let $Q$ be a square of side length $\epsilon$ at distance at least $\delta\in(0,1]$ from the boundary of a domain $D$, and let $\mu$ be chordal \SLEk\ in $D$. Let $\zeta,\zeta'$ be points in $Q$. Assume that $\epsilon<\delta/100$. Then
\[
d_S(\mu_Q,\, \mu_\zeta\, A(Q)) \le c\,(\epsilon/\delta)^\alpha.
\]
In particular, $d_S(\mu_{\zeta'}, \mu_\zeta) \le c\,(\epsilon/\delta)^\alpha$.
\end{prop}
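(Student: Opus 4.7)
My plan is to work at an intermediate scale $r$, with $\epsilon \le r \le \delta/2$, to be optimized at the end, using the circle $C$ centered at the center of $Q$ with radius $r$. At this scale Proposition~\ref{prop:comparison} applies with $r_0\asymp\epsilon$, so the total-mass part of $d_S$ is immediate: by~\eqref{tmass}, $|\log(\|\mu_Q\|/(\|\mu_\zeta\|\,A(Q)))| \le c\,\epsilon/r$.

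For the Prokhorov term $\dd(\mu_Q^\#, \mu_\zeta^\#)$, I would build an explicit coupling. The Radon--Nikodym bound~\eqref{rnd} on the pair $(\gamma[0,\tau_C],\tilde\gamma[0,\tilde\tau_C])$ permits a coupling of $\mu_Q^\#$ and $\mu_\zeta^\#$ under which the initial and terminal segments agree as parametrized curves on an event of probability at least $1-c\,\epsilon/r$. The two coupled samples then differ only on the middle segment $\gamma[\tau_C,t_\gamma-\tilde\tau_C]$, so the task reduces to showing this segment is small in both spatial diameter and time duration with high probability.

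For the diameter, introduce a further scale $R\ge 1$ and bound the probability that the middle segment strays to distance $Rr$ of $Q$: Proposition~\ref{2srescape} (after rescaling so the local scale around $\zeta$ is order~$r$) handles $\mu_\zeta^\#$, and Theorem~\ref{alltime} handles $\mu_Q^\#$, each giving an escape probability of order $R^{-(4a-1)/2}$. For the duration, I would apply Theorem~\ref{11sep} to the disk $U$ of radius $Rr$ about the center of $Q$ (valid provided $Rr\le\delta/10$), obtaining $\E[\Theta(U)]\asymp(Rr)^d$ under both measures, and then a Markov step gives $\Theta(U)\le T$ with probability at least $1-c(Rr)^d/T$. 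On the intersection of the coupling, escape, and Markov events, the two coupled curves agree on initial and terminal segments and their middle segments each lie in a disk of radius $Rr$ with duration at most $T$; using the reparametrization freedom built into $\dd$, I put the two middle segments on corresponding intervals of a common reparametrized curve, obtaining $\dd(\gamma_Q,\gamma_\zeta)\le 2T+2Rr$ pointwise on the good event.

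Assembling the pieces,
\[
\dd(\mu_Q^\#,\mu_\zeta^\#)\le c\max\Bigl(\tfrac{\epsilon}{r},\,R^{-(4a-1)/2},\,\tfrac{(Rr)^d}{T},\,T,\,Rr\Bigr),
\]
and choosing $r$, $R$, and $T$ as suitable small powers of $\epsilon/\delta$ balances the five terms and yields a common bound $O((\epsilon/\delta)^\alpha)$ for some $\alpha>0$ depending only on $\kappa$. The ``in particular'' clause then follows from the triangle inequality for $d_S$, which is a metric invariant under common rescaling of the two measures (so $d_S(\mu_Q,\mu_\zeta\,A(Q))=d_S(\mu_Q/A(Q),\mu_\zeta)$), applied at the intermediate point $\mu_Q/A(Q)$.

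The main obstacle I anticipate is not any one estimate but the three-way balance of the scales $r$, $R$, $T$: each factor produces a different power of $\epsilon/\delta$, and collapsing them all to a common $\alpha>0$ requires a careful optimization. A secondary subtlety is that Proposition~\ref{2srescape} and Theorem~\ref{alltime} are phrased for the normalized configuration with $z,w\in\partial\D$, so to apply them to the middle segment at its natural scale~$r$ I need either to invoke conformal invariance of two-sided radial SLE to rescale to that normalized setup or to extract a scale-free version of the escape estimate from the same proofs.
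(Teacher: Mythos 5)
Your proposal is correct and follows essentially the same route as the paper: split at a circle $C$ around $Q$, couple the initial and terminal segments via the Radon--Nikodym bound~\eqref{rnd}, control the middle segment in space by the escape estimates and in time by Theorem~\ref{11sep}, and optimize the scales. The paper simply fixes your free parameters concretely (after normalizing $\delta=1$): $r=\epsilon^{1/2}$, escape radius $Rr=\epsilon^{1/4}$, time threshold $T=c\,\epsilon^{d/8}$, yielding $\alpha=\min(4a-1,d)/8$.
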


\begin{proof}
By scaling the given configuration up, it suffices to assume that $\delta=1$. (Observe that distance and time each increase by a fixed factor under scaling up and hence the $\dd$ distance between curves can only increase).

By~\eqref{tmass} we have 
\[
\biggl|\log\frac{\|\mu_Q\|}{\|\mu_\zeta\|\,A(Q)}\biggr|
\le c\,\epsilon.
\]
Now let $C$ be the circle around $Q$ of radius $\epsilon^{1/2}$ and recall the corresponding stopping times $\tau_C$ for $\gamma$ and $\tilde\tau_C$ for $\tilde\gamma$. Considering the probability measures as measures solely on the pair $(\gamma_\tau,\tilde\gamma_{\tilde\tau})$,
by~\eqref{rnd} we have
\[
\Biggl|\log\frac{d\mu_Q^\#}{d\mu_\zeta^\#}\Biggr|
\le c\, \epsilon^{1/2}.
\]

We must address the behaviour of the intervening segment $\gamma[\tau_C,t_\gamma-\tilde\tau_C]$: conditional on $(\gamma_\tau,\tilde\gamma_{\tilde\tau})$, the intervening segment of $\gamma$ is unlikely to go far in space or time.

By Theorem~\ref{alltime}, the probability under $\mu_Q^\#$ that $\gamma[\tau_C,t_\gamma-\tilde\tau_C]$ leaves the circle $C'$ of radius $\epsilon^{1/4}$ around $Q$ is at most $c\,\epsilon^{(4a-1)/8}$.
By \cite[Theorem~1.3]{FL}, the same estimate holds under two-sided radial \SLEk, $\mu_\zeta^\#$.

Finally, by Theorem~\ref{11sep}, the probability (under either $\mu_Q^\#$ or  $\mu_\zeta^\#$) that more than $c\,\epsilon^{d/8}$ time is spent inside $C'$ is at most $c\,\epsilon^{d/8}$, with a constant $c$ depending only on $\kappa$. (Note that the square $Q$ here is actually the set $S$ in the application of Theorem~\ref{11sep}.)

Choose $\alpha=\min(4a-1,d)/8<1/4$. Denote $\gamma_2:=\gamma[\tau_C,t_\gamma-\tilde\tau_C]$ and let $\hat\gamma_2$ be the segment $[\gamma(\tau_C),\tilde\gamma(\tilde\tau_C)]$ (traversed over the same time interval of length $t_\gamma-\tilde\tau_C-\tau_C$). We have, therefore, under both $\Prob=\mu_Q^\#$ and $\Prob=\mu_\zeta^\#$,
\[
\Prob\bigl\{\dd(\gamma_2,\hat\gamma_2)>c\,\epsilon^\alpha\mid\gamma_{\tau_C},\tilde\gamma_{\tilde\tau_C}\bigr\} < c\,\epsilon^\alpha.
\]
Since $\gamma$ is the concatenation of $\gamma_1:=\gamma_{\tau_C}$, $\gamma_2$ and the reversal $\gamma_3$ of $\tilde\gamma_{\tilde\tau_C}$, it follows from~\eqref{eq:concat} that 
\[
\Prob\bigl\{\dd(\gamma,\gamma_1\oplus\hat\gamma_2\oplus\gamma_3)
> c\,\epsilon^\alpha\mid\gamma_{\tau_C},\tilde\gamma_{\tilde\tau_C}\bigr\} 
< c\,\epsilon^\alpha.
\]
Finally, the Radon-Nikodym derivative estimates above show that we may couple the measures $\mu_Q^\#$ and $\mu_\zeta^\#$ on initial and final segments $(\gamma_1,\gamma_3)$ and $(\eta_1,\eta_3)$ under a joint law $\Prob$ such that
\[
\Prob\bigl\{\gamma_1\oplus\hat\gamma_2\oplus\gamma_3 \ne
\eta_1\oplus\hat\eta_2\oplus\eta_3\bigr\}
< c\,\epsilon^\alpha.
\]
In this coupling $\Prob$, conditional on $\gamma_1,\gamma_3,\eta_1,\eta_3$, let us declare the intermediate segments $\gamma_2$ and $\eta_2$ to be independent and follow the laws of $Q$-length-biased chordal \SLEk\ and two-sided radial \SLEk\ through $\zeta$ in the doubly slit domains $D\sm\{\gamma_1,\gamma_3\}$ and $D\sm\eta_1,\eta_3$ respectively.
Then the completed curves $\gamma=\gamma_1\oplus\gamma_2\oplus\gamma_3$ and $\eta=\eta_1\oplus\eta_2\oplus\eta_3$ are marginally distributed as $Q$-length-biased chordal \SLEk\ and two-sided radial \SLEk\ through $\zeta$ in $D$, respectively.
Adding the above estimates yields the estimate
\begin{align*}
\Prob\bigl\{\dd(\gamma,\eta)>2c\,\epsilon^{\alpha}\bigr\}
&\le \Prob\bigl\{\dd(\gamma,\gamma_1\oplus\hat\gamma_2\oplus\gamma_3)>c\,\epsilon^\alpha\bigr\}\\
&\quad + \Prob\bigl\{\dd(\eta,\eta_1\oplus\hat\eta_2\oplus\eta_3)>c\,\epsilon^{\alpha}\bigr\}\\
&\quad + \Prob\bigl\{\gamma_1\oplus\hat\gamma_2\oplus\gamma_3\ne
\eta_1\oplus\hat\eta_2\oplus\eta_3\bigr\} \le 3c\,\epsilon^{\alpha},
\end{align*}
and hence $\dd(\mu_Q^\#,\mu_z^\#)<3c\,\epsilon^{\alpha}$, as required.
\end{proof}

\begin{proof}[Proof of Theorem~\ref{thm:lengthbias}]

We first establish that the proper Riemann integrals that appear in Definition~\ref{improper} over the truncated domain $D_{r,s,t}=D\cap B(0,\frac1r)\sm \ol{B(z,s)}\sm \ol{B(w,t)}$ are $D_{r,s,t}$-length-biased chordal \SLEk:
\begin{equation}\label{drst}
\int_{D_{r,s,t}} f(\zeta)\,dA(\zeta) = \mu_{D_{r,s,t}}.
\end{equation}
The use of $B(0,\frac1r)$ is unnecessary for the theorem as stated, but see Remark~\ref{endrmk}.

Let $\epsilon\in(0,1)$ be arbitrary.
Consider a rectangle $R$ containing $D_{r,s,t}$ and a tagged partition~$\mathcal P$ of $R$ with mesh $\|\mathcal P\|<\epsilon$ and arbitrary  sample points $z_Q\in Q$ for each $Q\in\mathcal P$.

We shall divide the subrectangles of $\mathcal P$ into two groups. Let $I$ (the ``interior'' rectangles) consist of those subrectangles $Q\in\mathcal P$ that lie at distance $\epsilon^{1/2}$ or greater from $\C\sm D_{r,s,t}$, and let $J = \mathcal P \sm I$.

For each $Q\in I$,  we have $d_S(\mu_Q,\,\mu_{z_Q}\,A(Q))\le c\,\epsilon^{\alpha/2}$ by Proposition~\ref{smallscale}.
It follows by Lemma~\ref{ds-additive} that 
\[
d_S\Bigl(\mu_I, \sum_{Q\in I} \mu_{z_Q}\,A(Q)\Bigr) \le c\,\epsilon^{\alpha/2},
\]
where $\mu_I=\sum_{Q\in I} \mu_Q$ by~\eqref{additive}.
(By a convenient abuse of notation, we write $\mu_I$ for $\mu_{\bigcup_{Q\in I} Q}$.)

We also know the total mass of $\mu_I$ is $\int_I G_D(\zeta)\,dA(\zeta)$, which is of order $1$ for $\epsilon$ small enough. (Recall that the configuration $D$ is fixed throughout this proof.) This is because $G_D$ is integrable on $D_{r,s,t}$. 

We may therefore convert the $d_S$ estimate into the following estimate on the Prokhorov distance between the measures in question:
\[
\dd\Bigl(\mu_I, \sum_{Q\in I} \mu_{z_Q}\,A(Q)\Bigr) \le c_D\,\epsilon^{\alpha/2},
\]
where $c_D$ is a constant that depends only on $D$ (and $\kappa$, since our generic constants $c$ may depend on $\kappa$).

Let us now consider the remaining subrectangles $Q\in J$. The key observation here is that, even if we do not control the curve's behaviour under $\mu_Q$ and $\mu_{z_Q}$, the total mass of these measures is insufficient to make a significant contribution to the total. Indeed, since $D_{r,s,t}$ has bounded, piecewise analytic boundary, the total area of the rectangles in $J$ that actually intersect $\ol D_{r,s,t}$ is at most $c_{D_{r,s,t}}\, \epsilon^{1/2}$, where $c_{D_{r,s,t}}$ depends only on $D_{r,s,t}$. The total masses
\[
\|\mu_J\| = \int_{D_{r,s,t}\cap \bigcup_{Q\in J} Q} G_D(\zeta)\,dA(\zeta)
\]
and
\[
\Bigl\|\sum_{Q\in J} \mu_{z_Q}\,1_{z_Q\in D_{r,s,t}}\,A(Q)\Bigr\|
= \sum_{Q\in J:\,z_Q\in D_{r,s,t}} G_D({z_Q})\,A(Q)
\]
are both at most $c_{D_{r,s,t}}\, \epsilon^{1/2}$.

Using the additivity of the Prokhorov metric $\dd$, as seen in~\eqref{prokhadd}, we may conclude that
\[
\dd\Bigl(\mu_I+\mu_J,\sum_{Q\in\mathcal P} \mu_{z_Q}\,\,1_{z_Q\in D_{r,s,t}}\,A(Q)\Bigr) \le c_{D_{r,s,t}} \,\epsilon^{\alpha/2},
\]
and since $\mu_{D_{r,s,t}} = \mu_I + \mu_J$, we have proved~\eqref{drst}.
\medskip

Now we need to take an increasing limit of the improper integrals in~\eqref{drst}. We will need to use one more fact about the Green's function, namely that 
\begin{equation}\label{gd-integrable}
\int_D G_D(\zeta)\,dA(\zeta)<\infty.
\end{equation}
Observe that $G_D$ is continuous on $\ol D$ except at $z$ and $w$, and is locally integrable at $z$ and~$w$. This follows, after applying a conformal map, from the integrability of $G_\Half$ near $0$ and the fact that $G_\Half$ is invariant under the inversion $z\mapsto-1/z$.
Second, $G_D$ is continuous on $\ol D$ except at $z$ and $w$, with value $0$ on $\p D\sm\{z,w\}$.
As $D$ is bounded, we have~\eqref{gd-integrable}.

We may now simply note that
\[
\mu_D = \mu_{D_{r,s,t}} + \mu_{D\sm D_{r,s,t}}
= \int_{D_{r,s,t}} f(\zeta)\,dA(\zeta) + \mu_{D\sm D_{r,s,t}}
\]
and therefore, by~\eqref{gd-integrable},
\[
\dd\Bigl(\mu_D,\int_{D_{r,s,t}} f(\zeta)\,dA(\zeta)\Bigr)
\le\|\mu_{D\sm D_{r,s,t}}\| = \int_{D\sm D_{r,s,t}} G_D(\zeta)\,dA(\zeta) \to 0
\]
as $r,s,t\to 0$. Therefore length-biased chordal \SLEk\ $\mu_D$ is equal to the improper integral
\[
\mu_D = \int_D G_D(\zeta)\,dA(\zeta).
\qedhere
\]
\end{proof}

We conclude with some remarks on the proof.

\begin{rmk}\label{endrmk}
The case where $D$ is unbounded can be handled by the same improper integral mechanism, but we do need to verify~\eqref{gd-integrable}.
Suppose still that $D$ has analytic boundary, including at $\infty$, meaning that $\p(1/D)$ is analytic at $0$.
The question is whether $G_D$ is integrable at $\infty$. This is certainly false if $z$ or $w$ is $\infty$. Otherwise, the following computation (in the case $(D,z,w)=(\Half,x,y)$ for simplicity, though the result is the same for any $D$) can be used to verify that this will be the case if and only if $4a>\sqrt2+1$, that is, if and only if $\kappa < 8(\sqrt2-1)=3.3137...$:
\begin{align*}
\int_{\zeta\in\Half:|\zeta|>R^{-1}} G_{\Half,x,y}(\zeta)\,dA(\zeta)
&=\int_{z\in\Half:|z|<R} G_{\Half,-1/x,-1/y}(z)\,|z|^{2(2-d)}\,\frac{dA(z)}{|z|^4}\\
&\asymp c_{x,y,\kappa} \int_{z\in\Half:|z|<R} (\Im z)^{4a+d-3}\,|z|^{-2d}\,dA(z)\\
&\asymp c_{x,y,\kappa} \int_0^R r^{4a-\frac1{4a}-3}\,dr.
\end{align*}
\end{rmk}

\begin{rmk}
If we allow $D$ to have a non-analytic boundary, the result is still true so long as we can prove the ``discarded'' parts of the measure are still small. In particular, if $G_D$ is integrable and the area of the $\epsilon^{1/2}$-expansion $(\p D)^{\epsilon^{1/2}}$ decays as a power of $\epsilon$ as $\epsilon\to 0$, we can still apply the same proof with a different power of $\epsilon$. This certainly encompasses such cases as bounded domains with piecewise $C^1$ boundaries.
\end{rmk}


\begin{thebibliography}{99}
\bibitem{BJVL}
C. Bene\v s,\ F. Johansson Viklund\ and\ G. F. Lawler,
Scaling limit of the loop-erased random walk Green's function,
arXiv:1402.7345 (2014).
\bibitem{Billingsley} P. Billingsley, {\it Convergence of probability measures}, second edition, Wiley Series in Probability and Statistics: Probability and Statistics, Wiley, New York, 1999.
\bibitem{CN}
F. Camia\ and\ C. M. Newman, Two-dimensional critical percolation: the full scaling limit, Comm. Math. Phys. {\bf 268} (2006), no.~1, 1--38.
\bibitem{FL}
L. S. Field\ and\ G. F. Lawler, {Escape probability and transience for SLE}, Electron. J. Probab. {\bf 20} (2015), no.~10, 1--14. { \tt arXiv:1407.3314}
\bibitem{Lbook}
G. F. Lawler, {\it Conformally invariant processes in the plane}, Mathematical Surveys and Monographs, 114, Amer. Math. Soc., Providence, RI, 2005.  
\bibitem{Lcont} 
G. F. Lawler, Continuity of radial and two-sided radial SLE at the terminal point, in {\it In the tradition of Ahlfors-Bers. VI}, 101--124, Contemp. Math., 590, Amer. Math. Soc., Providence, RI, 2013. 
{ \tt arXiv:1104.1620}
\bibitem{LP}
G. F. Lawler\ and\ J. Perlman, Loop measures and the Gaussian free field, { \tt arXiv:1403.4285} (2014).
\bibitem{basicpropnew}
G.~F.~Lawler and M.~A.~Rezaei, {Up-to-constants bounds on the two-point Green's function for SLE curves}, { \tt arXiv:1503.06689} (2015).
\bibitem{minkowski} G.~F.~Lawler\ and\ M.~A.~Rezaei, {Minkowski content and natural para\-metrization for the Schramm--Loewner evolution}, 
to appear in Ann. Probab.
{ \tt arXiv:1211.4146}
\bibitem{LSWrest}
G. F. Lawler, O. Schramm\ and\ W. Werner, Conformal restriction: the chordal case, J. Amer. Math. Soc. {\bf 16} (2003), no.~4, 917--955.
{  \tt arXiv:math/0209343}
\bibitem{LS}
G. F. Lawler\ and\ S. Sheffield, A natural parametrization for the Schramm-Loewner evolution, Ann. Probab. {\bf 39} (2011), no.~5, 1896--1937.
\bibitem{LTF}
G. F. Lawler\ and\ J. A. Trujillo Ferreras, Random walk loop soup, Trans. Amer. Math. Soc. {\bf 359} (2007), no.~2, 767--787.
\bibitem{LW}
G. F. Lawler\ and\ W. Werner, The Brownian loop soup, Probab. Theory Related Fields {\bf 128} (2004), no.~4, 565--588.
{  \tt arXiv:math/0304419}
\bibitem{multipoint} G. F. Lawler\ and\ B. M. Werness, Multi-point Green's functions for SLE and an estimate of Beffara, Ann. Probab. {\bf 41} (2013), no.~3A, 1513--1555.
{ \tt arXiv:1011.3551}
\bibitem{LeJan}
Y. Le Jan, {\it Markov paths, loops and fields}, Lecture Notes in Mathematics, 2026, Springer, Heidelberg, 2011.
\bibitem{Rudin}
W. Rudin, {\it Functional analysis}, second edition, International Series in Pure and Applied Mathematics, McGraw-Hill, New York, 1991.
\bibitem{Sheff}
S. Sheffield, Exploration trees and conformal loop ensembles, Duke Math. J. {\bf 147} (2009), no.~1, 79--129. 
\bibitem{SheffWer}
S. Sheffield\ and\ W. Werner, Conformal loop ensembles: the Markovian characterization and the loop-soup construction, Ann. of Math. (2) {\bf 176} (2012), no.~3, 1827--1917.
\bibitem{Werner}
W. Werner, The conformally invariant measure on self-avoiding loops, J. Amer. Math. Soc. {\bf 21} (2008), no.~1, 137--169. 
{ \tt arXiv:math/0511605}
\bibitem{Yosida}
K. Yosida, {\it Functional analysis}, sixth edition, Grundlehren der Mathematischen Wissenschaften, 123, Springer, Berlin, 1980. 
\bibitem{ZhanC}
D. Zhan, Reversibility of chordal SLE, Ann. Probab. {\bf 36} (2008), no.~4, 1472--1494.
{ \tt arXiv:0808.3649}
\end{thebibliography}
\end{document}